\newtheorem{thm}{Theorem}[section]
\newtheorem{prop}[thm]{Proposition}
\newtheorem{lem}[thm]{Lemma}
\newtheorem{cor}[thm]{Corollary}
\newtheorem{asm}{Assumption}
\theoremstyle{remark}
\newtheorem{rem}[thm]{Remark}
\theoremstyle{definition}
\newcommand{\ra}{\rightarrow}
\newcommand{\N}{\mathbb N}     
\newcommand{\Q}{\mathbb Q}     
\newcommand{\R}{\mathbb R}     
\newcommand{\Z}{\mathbb Z}     
\newcommand{\Zp}{\Z_+}	       
\renewcommand{\a}{\alpha}
\renewcommand{\b}{\beta}
\renewcommand{\d}{\delta}
\newcommand{\e}{\varepsilon}
\renewcommand{\l}{\lambda}
\newcommand{\s}{\sigma}
\newcommand{\bigo}{\mathcal{O}}
\newcommand{\w}{\omega}              
\renewcommand{\P}{\mathbb{P}}        
\newcommand{\E}{\mathbb{E}}          
\newcommand{\vp}{\mathrm{v}_P}       
\newcommand{\iid}{i.i.d.\ }          
\newcommand{\indd}[1]{ \mathbf{1}\{ #1 \} }
\newcommand{\Cb}{\mathcal{C}_{0}} 	
\newcommand{\prof}{\alpha_0}		
\begin{document}


\title{Systems of One-dimensional Random Walks in a Common Random Environment}
\author{Jonathon Peterson
\thanks{Research partially supported by National Science Foundation grant DMS-0802942.
Part of this work was completed during a visit to the Institut Mittag-Leffler in Djursholm, Sweden for the program ``Discrete Probability.''}
 \\ Cornell University \\ Department of Mathematics \\ Malott Hall \\ Ithaca, NY 14853 \\ Email: \url{peterson@math.cornell.edu} \\ URL: \url{http://www.math.cornell.edu/~peterson}}

\date{}

\maketitle

\begin{abstract}
We consider a system of independent one-dimensional random walks in a common random environment under the condition that the random walks are transient with positive speed $\vp$. 
We give upper bounds on the quenched probability that at least one of the random walks started in the interval $[An, Bn]$ has traveled a distance of less than $(\vp - \e)n$. This leads to both a uniform law of large numbers and a hydrodynamic limit. 
We also identify a family of distributions on the configuration of particles (parameterized by particle density) which are stationary under the (quenched) dynamics of the random walks and show that these are the limiting distributions for the system when started from a certain natural collection of distributions.
\end{abstract}

\textbf{Key words:} Random walk in random environment, hydrodynamic limit, large deviations.

\textbf{AMS 2000 Subject Classification:} Primary 60K37; Secondary 60F10, 60K35.

Submitted to EJP on July 21, 2009, final version accepted June 22, 2010.

\newpage
\begin{section}{Introduction and Statement of the Main Results}

The object of study in this paper is a system of independent one-dimensional random walks in a common random environment. We modify the standard notion of random walks in random environment (RWRE) to allow for infinitely many particles. Let $\Omega := [0,1]^\Z$. An environment is an element $\w=\{\w_x\}_{x\in\Z} \in \Omega$. 
Given an environment $\w$, we let $\{X^{x,i}_\cdot \}_{x\in\Z, i\in \Z_+}$ be an independent collection of Markov chains with law $P_\w$ defined by
\[
 P_\w(X_0^{x,i} = x) = 1, 
\quad\text{and}\quad 
P_\w( X_{n+1}^{x,i} = z | X_n^{x,i} = y ) =
  \begin{cases}
   \w_y & z = y+1 \\
   1-\w_y & z= y-1 \\
   0 & \text{otherwise}.
  \end{cases}
\]
When we are only concerned with a single random walk started at $y\in\Z$ we will use the notation $X^{y}_{n}$ instead of $X^{y,1}_n$. Moreover, if the walk starts at the origin we will use the notation $X_n$ instead of $X^{0}_n$. 

The law $P_\w$ is called the \emph{quenched} law of the random walks. Let $P$ be a probability measure on $\Omega$. The \emph{averaged} law of the random walks is defined by averaging the quenched law over all environments. That is, $\P(\cdot) = \int_\Omega P_\w(\cdot) P(d\w)$. Quenched and averaged expectations will be denoted by $E_\w$ and $\E$, respectively, and expectations according the the measure $P$ on environments will be denoted by $E_P$. 

In this paper we will always make the following assumptions on the environment
\begin{asm}\label{IIDasm}
 The environments are uniformly elliptic and i.i.d. That is, the random variables $\{\w_x\}_{x\in\Z}$ are \iid under the measure $P$, and $P( \w_0 \in [c,1-c] ) = 1$ for some $c>0$. 
\end{asm}
\begin{asm}\label{Basm}
 $E_P[ \rho_0 ] < 1$, where $\rho_x := \frac{1-\w_x}{\w_x}$.
\end{asm}
Assumptions \ref{IIDasm} and \ref{Basm} imply that the random walks are transient to $+\infty$ with positive speed \cite{sRWRE}. That is, for any $x\in\Z$ and $i\geq 1$, 
\begin{equation}\label{LLN}
 \lim_{n\ra\infty} \frac{X^{x,i}_n-x}{n} = \frac{1- E_P[ \rho_0]}{1+E_P[\rho_0]} =: \vp, \quad \P-a.s.
\end{equation}
Our first main result is the following uniform version of \eqref{LLN}. 
\begin{thm}\label{ULLN}
Let Assumptions \ref{IIDasm} and \ref{Basm} hold. Then for any $A<B$ and $\gamma < \infty$,
\[
\lim_{n\ra\infty} \max_{y\in(An, Bn], \; i \leq n^\gamma} \left| \frac{X^{y,i}_n - y}{n} - \vp \right| = 0, \quad \P-a.s.
\]
\end{thm}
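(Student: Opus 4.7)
The plan is a Borel--Cantelli argument combined with a quenched hitting-time analysis. Fix $\e>0$ and let
$$\mathcal{E}_n(\e) := \Bigl\{ \exists\, y \in (An, Bn],\, i \le n^\gamma: \bigl| X_n^{y,i} - y - \vp n\bigr| > \e n\Bigr\}.$$
It suffices to show $\sum_n \P(\mathcal{E}_n(\e)) < \infty$ for each $\e>0$, because then Borel--Cantelli together with letting $\e$ run through a countable sequence $\e_k \downarrow 0$ gives the $\P$-a.s.\ statement of Theorem \ref{ULLN}. Split $\mathcal{E}_n(\e)$ into a \emph{speedup} event $\{\exists y,i: X_n^{y,i}-y > (\vp+\e)n\}$ and a \emph{slowdown} event $\{\exists y,i: X_n^{y,i}-y < (\vp-\e)n\}$ and bound each in turn.

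The speedup direction is the easy one. Under Assumptions \ref{IIDasm} and \ref{Basm} the averaged one-walk tail $\P(X_n > (\vp+\e)n)$ decays exponentially in $n$ by standard large-deviation estimates for transient nearest-neighbor RWRE: a speedup forces an atypical right-bias in the environment ahead of the walk. Since the walks $X^{y,i}$ are independent under $P_\w$ and $P$ is stationary, a union bound over the $\bigo(n^{1+\gamma})$ pairs $(y,i)$ preserves exponential, hence summable, decay.

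The slowdown direction is the main obstacle, and it is precisely the estimate announced in the abstract. The averaged slowdown $\P(X_n < (\vp-\e)n)$ may decay only polynomially (e.g.\ in the nestling case $P(\rho_0>1)>0$, where traps delay the walk), so a direct union bound over all $(y,i)$ fails and one must work conditionally on the environment. The plan is to isolate a ``good'' set $G_n \subset \Omega$ with $P(G_n^c) \le n^{-(\gamma+3)}$ on which the quenched slowdown probability is super-polynomially small, uniformly in $y \in (An, Bn]$. Concretely, rewrite the slowdown as a hitting-time tail $\{X_n^y - y < k\} = \{T_k^y > n\}$ with $T_k^y = \sum_{j=0}^{k-1}\tau_j^y$ a sum of $P_\w$-independent crossing times from $y+j$ to $y+j+1$. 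The quenched exponential moments $E_\w[\exp(\lambda \tau_j^y)]$ are controlled by the depths of backward traps in the potential $V_x = \sum_{\ell \le x}\log\rho_\ell$ lying to the left of $y+j$. Taking $G_n$ to be the environments for which every trap in the window $[An - n, Bn + n]$ has depth at most $K\log n$ for $K$ large, a Chernoff bound gives $P_\w\!\left(T_{\fl{(\vp-\e)n}}^y > n\right) \le e^{-cn}$ uniformly in $y$, and union-bounding over $y$ and $i$ followed by averaging yields $\P(\text{slowdown}) \le C n^{1+\gamma} e^{-cn} + n^{-(\gamma+3)}$, which is summable.

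The technical crux is the construction of $G_n$ and the uniform exponential-moment bound on the $\tau_j^y$. Under Assumption \ref{Basm} Jensen gives $E_P[\log \rho_0] \le \log E_P[\rho_0] < 0$, so $V$ is a negative-drift random walk with bounded i.i.d.\ increments (by uniform ellipticity), and standard Cram\'er bounds make $P(G_n^c)$ super-polynomially small once $K$ is chosen large. The remaining bookkeeping --- expressing $E_\w[\tau_j^y]$ and its exponential moments via the potential and verifying the uniform bound on $G_n$ --- is elementary but delicate, since the bound must hold simultaneously for every $y$ in an interval of length $\bigo(n)$; this uniformity is the heart of the argument.
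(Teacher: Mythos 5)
Your speedup bound and the overall Borel--Cantelli framework are fine, but the slowdown estimate --- which is the whole point of the theorem --- contains a genuine error. You claim that on a good set $G_n$ (all traps in the window of depth at most $K\log n$, $K$ large) a Chernoff bound gives $P_\w\bigl(T^y_{\fl{(\vp-\e)n}}>n\bigr)\le e^{-cn}$ uniformly in $y$. In the nestling case $P(\w_0<1/2)>0$ this is false, and cannot be repaired by any choice of $G_n$ with $P(G_n^c)\le n^{-(\gamma+3)}$: since trap depths have exponential tail with rate $s$ (where $E_P[\rho_0^s]=1$), a window of length of order $n$ contains, with $P$-probability tending to $1$, traps of depth close to $\frac1s\log n$; delaying the walk for time $\e n$ in one such trap has quenched probability of order $e^{-C n^{1-1/s+o(1)}}$, so for each large $n$ the quenched slowdown probability is at least stretched-exponentially, not exponentially, small on all but a vanishing fraction of environments. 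This is exactly what the quenched lower bound in \eqref{qsubexp} (from \cite{gzQSubexp}) encodes. Technically, your Chernoff step breaks because $E_\w[e^{\l\tau^y_j}]=\infty$ as soon as $\l$ exceeds roughly $e^{-(\text{depth of the relevant trap})}$; with depths of order $\frac1s\log n$ unavoidable (and your cutoff $K\log n$ with $K$ large needed to make $P(G_n^c)$ small certainly does not exclude them), you are forced to take $\l=\l_n\approx n^{-1/s-\d}$, and the bound you can hope for is $e^{-cn^{1-1/s-\d}}$, not $e^{-cn}$.

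That corrected, super-polynomial rate would still be summable after the union bound over $O(n^{1+\gamma})$ walks, so your architecture is salvageable --- but proving it uniformly over all $y\in(An,Bn]$ is precisely the content of Proposition \ref{UQLDPslow}, which is the paper's main technical contribution (blocks of length $n_j^{1/s+\d}$, the embedded coarse-grained walk on $\mathcal{K}_j$, quenched exponential moments of block-crossing times with $\l$ scaled by the block length, plus control of backtracking); dismissing this as ``elementary but delicate bookkeeping'' hides the actual work. Separately, your trap-based dichotomy does not cover the non-nestling regimes: if $P(\w_0<1/2)=0$ there is no $s$ with $E_P[\rho_0^s]=1$ and the potential has no traps at all, yet when $P(\w_0=1/2)>0$ slowdowns still have only stretched-exponential averaged decay (rate $n^{1/3}$, \cite{dpzTE}), and stretches of fair sites of length $c\log n$ again cap your Chernoff parameter at $\l\lesssim(\log n)^{-2}$; the paper handles this case by a separate averaged estimate plus a monotone coupling, and the genuinely exponential case occurs only under strictly positive drifts.
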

An obvious strategy for proving Theorem \ref{ULLN} would be to use a union bound and the fact that the probabilities $\P\left( \left| \frac{X^{y,i}_n - y}{n} - \vp \right| \geq \e \right)$ vanish for any $\e>0$ as $n\ra\infty$. However, if the distribution $P$ on environments is nestling (that is $P(\w_0 < \frac{1}{2})>0$), these probabilities only vanish polynomially fast (see \cite{dpzTE}) which is not good enough to prove Theorem \ref{ULLN}. The key to proving Theorem \ref{ULLN} is instead the following uniform analog of the quenched sub-exponential slowdown probabilities given in \cite{gzQSubexp}. 
\begin{prop}\label{UQLDPslow}
Let Assumptions \ref{IIDasm} and \ref{Basm} hold, and let $E_P \rho_0^s = 1$ for some $s> 1$. Then, for any $A<B$, $v\in(0,\vp)$, $\d>0$,  and $\gamma < \infty$,
 \begin{equation}\label{UXndecay}
  \limsup_{n\ra\infty} \frac{1}{n^{1-1/s-\d}} \log P_\w\left( \exists y \in(An, Bn], \; i\leq n^\gamma :\; X_{n}^{y,i} - y \leq nv \right) = -\infty, \qquad P-a.s.
 \end{equation}
\end{prop}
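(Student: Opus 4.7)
The plan is to combine a union bound over starting points and walk indices with a quantitative, uniform-in-$y$ strengthening of the single-walk quenched slowdown estimate from \cite{gzQSubexp}. First I would use the fact that, conditional on $\w$, the walks $\{X_n^{y,i}\}$ are independent, so by a union bound
\[
P_\w\bigl(\exists\, y \in (An, Bn],\ i \leq n^\gamma :\ X_n^{y,i} - y \leq nv\bigr) \leq (B-A)\, n^{\gamma+1} \max_{y \in (An, Bn]} P_\w\bigl(X_n^y - y \leq nv\bigr).
\]
Since the prefactor $n^{\gamma+1}$ is polynomial, it can be absorbed into the subexponential rate $n^{1-1/s-\d}$ at the cost of shrinking $\d$. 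It therefore suffices to show that $P$-a.s., for some $\d' \in (0,\d)$ and all sufficiently large $n$,
\[
\max_{y \in (An, Bn]} P_\w\bigl(X_n^y - y \leq nv\bigr) \leq \exp\!\bigl(-n^{1-1/s-\d'}\bigr).
\]

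Next I would prove this uniform bound via Borel--Cantelli after extracting a \emph{quantitative} version of Gantert--Zeitouni's single-walk estimate. Specifically, tracking constants in the argument of \cite{gzQSubexp} together with the observation that $P(\rho_0 > t)$ has polynomial tails of order $s$ (a consequence of $E_P\rho_0^s = 1$ and uniform ellipticity), one should be able to show that for every $M < \infty$ there exists $C_M$ such that
\[
P\!\left( P_\w(X_n \leq nv) > e^{-n^{1-1/s-\d'}}\right) \leq C_M\, n^{-M}
\]
for all large $n$. The mechanism is that $P_\w(X_n \leq nv)$ exceeds $e^{-n^{1-1/s-\d'}}$ only when an unusually tall potential ``valley'' of height slightly exceeding $\tfrac{1}{s}\log n$ occurs in $[0,n]$, and the $P$-probability of such valleys decays like any prescribed inverse polynomial if the height threshold is taken sufficiently above the critical level. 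By the translation invariance of the i.i.d.\ environment, the same bound holds for $P_\w(X_n^y - y \leq nv)$ at every integer $y$, and a union bound over the $O(n)$ integer starting points in $(An, Bn]$ gives
\[
P\!\left( \max_{y \in (An, Bn]} P_\w\bigl(X_n^y - y \leq nv\bigr) > e^{-n^{1-1/s-\d'}}\right) \leq C_M (B-A)\, n^{1-M}.
\]
Choosing $M > 2$ makes this summable in $n$, and Borel--Cantelli delivers the required $P$-a.s. uniform bound.

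The main obstacle I anticipate is the middle step: promoting Gantert--Zeitouni's qualitative almost-sure asymptotic into an effective estimate with polynomially decaying $P$-probability. Their statement is phrased as a single-walk a.s.\ limit, but the underlying proof controls the heights of the longest potential barriers on scale $n$ using Markov-type bounds on powers of $\rho_0$; the quantitative refinement should follow by reworking those probabilistic steps with care and without substantially new ideas. Everything else (the conditional independence union bound, shift invariance, Borel--Cantelli) is routine once that single-site quantitative bound is in hand.
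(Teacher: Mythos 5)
Your plan has the right outer structure (union bound over the $O(n^{\gamma+1})$ pairs $(y,i)$ and absorb the polynomial prefactor into the subexponential rate), but the middle step, which you correctly identify as the crux, is not merely hard — as stated it is false, and the Borel--Cantelli step over all $n$ then fails in the interesting regime of small $\d$.

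The asserted quantitative estimate, that for every $M<\infty$ one has $P\bigl(P_\w(X_n\leq nv) > e^{-n^{1-1/s-\d'}}\bigr) \leq C_M\,n^{-M}$ for large $n$, cannot hold with a fixed $\d'>0$. The quenched slowdown probability in a typical environment is of size roughly $e^{-c\,n^{1-1/s}}$, and it exceeds $e^{-n^{1-1/s-\d'}}$ already as soon as $[0,n]$ contains a single potential ascent of height about $(1/s+\d')\log n$: a trap of that height delays the walk by a geometric time of mean $\approx n^{1/s+\d'}$, so the quenched probability of accumulating delay $\approx \e n$ in that one trap is $\approx e^{-\e n^{1-1/s-\d'}}$. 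Since $E_P[\rho_0^s]=1$ makes $\prod_{j\leq m}\rho_j^s$ a martingale, the $P$-probability of a potential ascent of height $h$ from a fixed site is of order $e^{-sh}$, and a union bound over the $O(n)$ sites gives that the $P$-probability of such a trap somewhere in $[0,n]$ is of order $n\cdot n^{-1-s\d'}=n^{-s\d'}$. This exponent $s\d'$ is tied to the fixed $\d'<\d$ and cannot be made arbitrarily large, so a summable-in-$n$ bound of the form $n^{1-M}$ is out of reach for small $\d$. Also, the parenthetical remark that ``$P(\rho_0>t)$ has polynomial tails of order $s$'' contradicts Assumption~\ref{IIDasm}: uniform ellipticity makes $\rho_0\in[c/(1-c),(1-c)/c]$ bounded, so $P(\rho_0>t)=0$ for large $t$; the operative role of $E_P[\rho_0^s]=1$ is to govern the exponential rate of potential ascents as just described, not marginal tails of $\rho_0$.

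The paper sidesteps exactly this obstruction. First, it works along the sparse subsequence $n_j=\lfloor j^{2/\d}\rfloor$ and then passes to all $n$ via monotonicity of hitting times and exponential backtracking tails (Lemma~\ref{Timelem}). Along the subsequence, a $P$-probability of failure of size $n_j^{-s\d'}$ gives a Borel--Cantelli sum $\sum_j j^{-2s\d'/\d}$, which converges provided $\d'>\d/(2s)$, achievable since $s>1$. Second, instead of extracting a per-starting-point environmental tail and paying a union-bound cost of $O(n)$ environments, the paper makes the Gantert--Zeitouni block analysis itself uniform over starting points: after reducing to lattice points $m k_j$ (Lemma~\ref{latticelem}), the inputs of Lemma~\ref{keylemma} (the analogues of Lemmas~6 and~8 of \cite{gzQSubexp} on backtracking probabilities and crossing-time moment generating functions) are statements about the environment in blocks of $I_j$, hence hold simultaneously for all $m\in\mathcal{M}_j$ on a single $P$-full event, with no additional per-$y$ union bound over environments. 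If you want to salvage your strategy, you would need to (i) replace ``for all $M$'' by a specific polynomial rate and (ii) perform Borel--Cantelli only along a sparse subsequence, at which point you are essentially reproducing Lemmas~\ref{Timelem}--\ref{keylemma}.
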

\begin{rem}
The proofs of the quenched \cite{gzQSubexp} and averaged \cite{dpzTE} subexponential rates of decay for slowdown probabilities make clear why there is a difference in the rate of decay. Slowdowns occur under the average probability by creating an atypical environment near the origin which traps the random walk for $n$ steps. For slowdowns under the quenched measure, an environment is fixed and (with high probability) contains only smaller traps - making it harder to slow down the random walk. 
Since particles starting at different (nearby) points in the same fixed environment encounter essentially the same traps, it is reasonable to expect that the uniform quenched large deviations decay at the same rate as the quenched large deviations of a single RWRE. 
\end{rem}

As an application of Theorem \ref{ULLN}, we prove a hydrodynamic limit for the system of random walks. 
For any $N$, let $\eta^N_0(\cdot) \in (\Zp)^\Z$ be an initial configuration of  particles. 
We will allow $\eta^N_0$ to be either deterministic or random (even depending on $\w$), 
but we still require that given $\w$, the paths of the random walks $\{ X^{x,i} \}_{x\in\Z,\; i\leq \eta^N_0(x)}$ are independent of the $\eta^N_0(x)$. 
As a slight abuse of notation we will use $P_\w$ and $\P$ to denote the expanded quenched and averaged probability measures of the systems of RWRE with (random) initial conditions $\eta^N_0$. 
Let
\begin{equation}\label{etat}
 \eta_n^N(x) = \sum_{y\in\Z}\sum_{i=1}^{\eta^N_0(y)} \indd{ X_n^{y,i} = x }
\end{equation}
be the number of particles at location $x$ at time $n$ when starting with initial configuration $\eta^N_0$. 
A hydrodynamic limit essentially says that if (when scaling space by $N$), the initial configurations $\eta^N_0$ are approximated by a bounded function $\prof(y)$, then (with space scaled by $N$) the configuration $\eta^N_{Nt}$ is approximated by $\prof(y-\vp t)$. 


\begin{thm}\label{strongweakhydro}
Let $\Cb$ be the collection of continuous functions with compact support on $\R$.
Assume the initial configurations $\eta^N_0$ are such that there exists a bounded function $\prof(\cdot)$ such that for all $g\in\Cb$,
\begin{equation} \label{hydroIC}
 \lim_{N\ra\infty} \frac{1}{N} \sum_{x \in \Z } \eta^N_{0}(x) g(x/N) = \int_\R \prof(y) g(y) dy, \quad \P-a.s.
\end{equation}
Then, for all $g\in\Cb$ and $t<\infty$,
\begin{equation} \label{hydroConc}
 \lim_{N\ra\infty} \frac{1}{N} \sum_{x \in \Z} \eta^N_{Nt}(x) g(x/N) = \int_\R \prof(y - \vp t) g(y) dy, \quad \P-a.s.
\end{equation}
Moreover, if instead we only assume that the convergence in \eqref{hydroIC} holds in $P_\w$-probability, then the conclusion \eqref{hydroConc} also holds in $P_\w$-probability as well. 
\end{thm}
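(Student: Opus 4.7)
The plan is to expand the left side of \eqref{hydroConc} via \eqref{etat} as
\[
\frac{1}{N}\sum_{x\in\Z} \eta_{Nt}^N(x)\, g(x/N)
\;=\;
\frac{1}{N}\sum_{y\in\Z}\sum_{i=1}^{\eta^N_0(y)} g\bigl(X^{y,i}_{\lfloor Nt\rfloor}/N\bigr),
\]
and to decompose this as a \emph{main term} $M_N := \frac{1}{N}\sum_y \eta^N_0(y)\, g(y/N+\vp t)$ plus an \emph{error term} $E_N := \frac{1}{N}\sum_y\sum_{i=1}^{\eta^N_0(y)} \bigl[g(X^{y,i}_{\lfloor Nt\rfloor}/N) - g(y/N+\vp t)\bigr]$. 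Since $g$ has compact support, the shifted function $\tilde g(y):=g(y+\vp t)$ lies in $\Cb$, so applying hypothesis \eqref{hydroIC} to $\tilde g$ and changing variables immediately yields $M_N \to \int \prof(z-\vp t)\, g(z)\, dz$ in the same mode (a.s.\ or in probability) as the hypothesis. The entire task thus reduces to showing $E_N\to 0$.

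Fix $M>0$ with $\mathrm{supp}(g)\cup\mathrm{supp}(\tilde g)\subset [-M,M]$, and put $I := [-M-t-1,\, M+t+1]$. The deterministic bound $|X^{y,i}_{\lfloor Nt\rfloor}-y|\le Nt$ together with $0<\vp<1$ forces each summand of $E_N$ to vanish whenever $y/N\notin I$ (for $N$ large). Choosing any $h\in\Cb$ with $h\ge \mathbf{1}_I$, the hypothesis applied to $h$ gives $\frac{1}{N}\sum_{y/N\in I}\eta^N_0(y)\le C$, and in particular the pointwise bound $\max_{y/N\in I}\eta^N_0(y)\le CN$. Writing $n:=\lfloor Nt\rfloor$, taking $\gamma = 2$ (so $CN\le n^\gamma$ for large $N$), and choosing $A<B$ such that $(A,B]\supset I/t$, Theorem \ref{ULLN} combined with $n/N\to t$ yields
\[
\max_{y/N\in I,\; i\le\eta^N_0(y)} \biggl|\frac{X^{y,i}_{n}}{N}-\frac{y}{N}-\vp t\biggr| \limN 0, \quad \P\text{-a.s.}
\]
Combined with the uniform continuity of $g$, each summand of $E_N$ is at most an arbitrary $\e>0$ for large $N$, whence $|E_N|\le \e\cdot \frac{1}{N}\sum_{y/N\in I}\eta^N_0(y)\le \e C$. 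This proves the $\P$-a.s.\ conclusion.

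For the in-probability version the same decomposition and estimates apply: Theorem \ref{ULLN} supplies $P_\w$-almost sure control of the walks for $P$-a.e.\ $\w$ (more than enough), while the mass bound $\frac{1}{N}\sum_{y/N\in I}\eta^N_0(y)\le C$ now holds with high $P_\w$-probability by tightness of the in-probability hypothesis applied to $h$. The main technical subtlety I anticipate is precisely the step of extracting a \emph{pointwise} polynomial bound $\eta^N_0(y)\le n^\gamma$ from the integrated hypothesis \eqref{hydroIC}, since it is what is needed to legitimately invoke Theorem \ref{ULLN}; it is resolved here only by the crude estimate $\max\le\mathrm{sum}$ on the bounded window $I$. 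Once this polynomial control is in place, the rest is a routine matching of the uniform LLN against the uniform continuity of $g$.
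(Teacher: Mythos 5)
Your proof is correct and follows essentially the same route as the paper: apply Theorem \ref{ULLN} to control all walks started in a bounded window, extract the required pointwise bound $\eta^N_0(y)\le CN\le n^\gamma$ from the integrated hypothesis \eqref{hydroIC} via a majorizing $h\in\Cb$, and then trade positions for starting points using the uniform continuity of $g$ and the hypothesis applied to the shifted test function. Your decomposition into $M_N + E_N$ over the full $y$-sum is a slightly tidier organization than the paper's, which restricts the $x$-sum to $(Na,Nb]$ and consequently has to track boundary sums separately, but the underlying argument is the same.
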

\begin{rem}
 An example of where assumption \eqref{hydroIC} is satisfied is when $\{\eta^N_0(x)\}_{x\in\Z}$ are independent and $\eta_0^N(x) \sim \text{Poisson}( \prof(x/N) )$. 
\end{rem}

The hydrodynamic limits in Theorem \ref{strongweakhydro} describe the behavior of the system of RWRE when time and space are both scaled by $N$. If we do not rescale space, we can study the limiting distribution of the configuration of particles as the number of steps tends to infinity. 
That is, given the distribution of the initial configuration of particles $\eta_0 \in (\Zp)^\Z$, we identify the limiting distribution of the process $\eta_n$.
Before stating our last main result we need to specify the assumptions on the initial configurations. We will allow the initial distribution to depend on the environment $\w$ (in a measurable way), but given $\w$ we will require that the initial configuration is a product measure. To make this precise, let $\Upsilon$ be the space of probability distributions on the non-negative integers $\Zp$ equipped with the topology of weak-$*$ convergence (convergence in distribution), and let $\nu: \Omega \ra \Upsilon$ be a measurable function. 
Also, for any $x\in\Z$ let $\theta^x$ be the shift operator on environments defined by $(\theta^x\w)_y = \w_{x+y}$. 
Then, for each environment $\w$, let $\eta_0$ have distribution $\nu^\w := \bigotimes \nu(\theta^x\w)$. That is, given $\w$, $\{\eta_0(x)\}_{x\in\Z}$ is an independent family of random variables and $\eta_0(x)$ has distribution $\nu(\theta^x\w)$. Let $P_{\w, \nu^\w}$ denote the quenched distribution of the system of random walks with initial distribution given by $\nu^\w$, and let $\P_\nu(\cdot) = \int_{\Omega} P_{\w,\nu^\w}(\cdot) P(d\w)$ be the corresponding averaged distribution. The corresponding expectations are denoted by $E_{\w,\nu^\w}$ and $\E_\nu$, respectively. 

We now define the unique family of limiting distributions for initial configurations with distributions given by $\P_{\nu}$ for some $\nu$. 
For any $\a>0$, let $\pi_\a:\Omega\ra\Upsilon$ be defined by 
\begin{equation} \label{fdef}
 \pi_\a(\w) = \text{Poisson}(\a f(\w)), \quad\text{where}\quad 
f(\w) =
 \frac{1}{\w_0} \left(1 + \sum_{i=1}^\infty \prod_{j=1}^i \rho_j \right). 
\end{equation}
The formula for $\vp$ in \eqref{LLN} and the fact that $P$ is \iid imply that $E_P[ f(\w) ]=1 / \vp$, and therefore $\E_{\pi_\a}[ \eta_0(0) ] = E_P[ \a f(\w) ] = \a/\vp$. 
Our final main result is the following.
\begin{thm}\label{uniquestat}
 Let Assumptions \ref{IIDasm} and \ref{Basm} hold. If $\nu:\Omega\ra \Upsilon$ is such that $\E_\nu( \eta_0(0) ) < \infty$, then $\P_\nu(\eta_n \in \cdot )$ converges weakly to $\P_{\pi_\a}(\eta_0\in\cdot)$ (in the space of probability measures on $(\Zp)^\Z$) as $n\ra\infty$, with $\a = \vp \E_\nu( \eta_0(0) )$.
\end{thm}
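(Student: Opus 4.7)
My plan is to prove the theorem by first observing that $\P_{\pi_\a}$ is itself stationary under the quenched dynamics and then establishing convergence $\P_\nu(\eta_n \in \cdot) \Rightarrow \P_{\pi_\a}$ via pointwise $P$-a.s.\ convergence of the quenched multivariate probability generating functions, which upgrades to the averaged convergence by dominated convergence. The structural fact powering both steps is the identity
\[
\sum_{y\in\Z} p_n(y,x;\w)\,f(\theta^y\w) = f(\theta^x\w), \qquad P\text{-a.s.,}
\]
where $p_n(y,x;\w) := P_\w(X_n^y = x)$, asserting that $f(\theta^\cdot \w)$ is a positive quenched invariant measure. This follows by induction from the one-step version, which is a direct check from the explicit formula \eqref{fdef} and the birth--death structure, and combined with Poisson thinning/superposition (independent $\text{Poisson}(\l_y)$ sources at each $y$ yield, at time $n$, independent Poissons at each $x$ with intensities $\sum_y \l_y\, p_n(y,x;\w)$) it immediately gives stationarity of $\pi_\a^\w$.

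Since particles evolve independently given $\w$ and the initial configuration is a product measure, a direct computation yields the quenched joint generating function
\[
E_{\w,\nu^\w}\!\left[\prod_{i=1}^k z_i^{\eta_n(x_i)}\right] = \prod_{y\in\Z} G_\w^y\!\left(1 - \sum_{i=1}^k (1-z_i)\,p_n(y,x_i;\w)\right),
\]
where $G_\w^y(u) := E_{\nu(\theta^y \w)}[u^{\eta_0(y)}]$. The target $\pi_\a^\w$ has generating function $\prod_i \exp(-\a f(\theta^{x_i}\w)(1-z_i))$; using the DCT expansion $1 - G_\w^y(1-\e) = \e\, \mu(\theta^y\w)(1 + o_\e(1))$ with $\mu(\w) := E_{\nu(\w)}[\eta_0(0)]$ (valid under the finite first-moment assumption), together with $\max_y p_n(y,x;\w) \to 0$ supplied by Theorem \ref{ULLN} and ballistic transience (which also kills the second-order Taylor error), the task reduces to verifying
\[
\sum_{y \in \Z} p_n(y,x;\w)\,\mu(\theta^y\w) \longrightarrow \a\, f(\theta^x\w), \qquad P\text{-a.s., as } n\to\infty, \text{ for each } x \in \Z.
\]

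To prove this key limit, I would introduce the reversed kernel $\hat{p}_n(x,y;\w) := p_n(y,x;\w)\, f(\theta^y\w)/f(\theta^x\w)$, which by $f$-invariance is a probability distribution in $y$, and rewrite the sum as $f(\theta^x\w)\,\hat{E}_\w^x[(\mu/f)(\theta^{\hat{X}_n}\w)]$, where $\hat{X}$ is the associated time-reversed chain. The environment viewed from this reversed particle is a Markov chain on $\Omega$ whose unique ergodic stationary distribution $Q$ has density $\vp f$ with respect to $P$, and one computes
\[
E_Q[\mu/f] = \vp\, E_P[\mu] = \vp\, \E_\nu[\eta_0(0)] = \a
\]
by the choice of $\a$. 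The main technical obstacle is making this ergodic convergence strong enough (pointwise $P$-a.s., not merely in distribution), which I would attack via the decomposition $\mu = \a f + h$ with $E_P[h] = 0$: the first piece yields $\a f(\theta^x\w)$ exactly by invariance, reducing the problem to $\sum_y p_n(y,x;\w)\, h(\theta^y\w)\to 0$. Truncating $\eta_0 \wedge M$ further reduces to bounded $h$, for which a direct variance calculation exploits the concentration of sources near $y\approx x - \vp n$ supplied by Theorem \ref{ULLN}, while the truncation tail is dominated via $E_P[\mu] < \infty$.
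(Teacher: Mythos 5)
Your proposal takes a genuinely different route from the paper. The paper's proof of Theorem~\ref{uniquestat} proceeds by a \emph{particle coupling}: two systems $\eta_n$ and $\zeta_n$ with marginals $\P_\nu$ and $\P_{\pi_\a}$ are run jointly, matched particles move together, and one tracks the excess ``$+$'' and ``$-$'' particles $\b_n^\pm$. Ergodicity of the excess-particle field (Lemma~\ref{ergodicbeta}), monotonicity of its density (Corollary~\ref{monotone}), and the key ``meeting'' Lemma~\ref{meeting} (two walks in a common environment started at same-parity sites must eventually collide, $\P$-a.s.) force the density of unmatched particles to zero (Proposition~\ref{couple}). Stationarity of $\pi_\a^\w$ (Lemma~\ref{StationaryDist}) and a tightness check finish the argument. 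You instead compute quenched generating functions and try to prove a quenched, pointwise-in-$\w$ limit theorem for the source distribution $y\mapsto p_n(y,x;\w)$, i.e.\ you need much finer information about the transition kernel than the paper ever uses.

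The analytic skeleton of your argument is coherent: the $f$-invariance identity, the Poisson superposition, the product-form of the generating function given $\w$, and the identification $E_Q[\mu/f]=\a$ where $dQ/dP = \vp f$ are all correct. But the load-bearing claim
\[
\sum_{y\in\Z} p_n(y,x;\w)\,\mu(\theta^y\w) \;\longrightarrow\; \a\, f(\theta^x\w), \qquad P\text{-a.s.},
\]
is not established by what you sketch, and in the regime $1<s<2$ it is genuinely delicate. Two concrete issues. First, Theorem~\ref{ULLN} gives only a macroscopic localization ($p_n(y,x;\w)$ is negligible unless $y\approx x-\vp n$ with $o(n)$ error); it does \emph{not} give $\max_y p_n(y,x;\w)\to 0$, which is a quenched delocalization/local-limit statement. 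In the regime $1<s<2$ the quenched law of $X_n$ around its (random) centering has no single limit and can develop peaks, so this needs a separate argument. Second, reducing to bounded $h=\mu-\a f$ with $E_P[h]=0$ and then invoking ``a direct variance calculation'' is where the proof actually has to be done: a second-moment bound over $\w$ can at best give convergence in $P$-probability, while the theorem is phrased as weak convergence of $\P_\nu(\eta_n\in\cdot)$ (averaged), and your product/DCT step requires $P$-a.s.\ (or at least in-$P$-probability for each cylinder, followed by a uniform integrability argument you have not supplied). Moreover the covariance of $p_n(y,x;\w)$ with $h(\theta^y\w)$ over the source window is highly nontrivial --- these quantities are strongly dependent through the common traps --- so the variance bound is not a routine computation. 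Likewise, the ergodic-average heuristic via the reversed environment chain started from $P$ rather than from its stationary law $Q$ gives convergence of Ces\`aro averages or of distributions, not the pointwise-in-$n$, pointwise-in-$\w$ statement you need. In short: the structure is right, but the central analytic lemma is exactly the hard part and is not proved; the paper's coupling argument sidesteps all of this fine control of $p_n(\cdot,x;\w)$ by using only collision of pairs of walks and an ergodic density argument.
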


The structure of the paper is as follows. Section \ref{UQLDP} is devoted to the proof of Proposition \ref{UQLDPslow}. The proof is an adaptation of the proof of the similar bounds given in \cite{gzQSubexp} for a single random walk. 
In Section \ref{Hydro} we give the proof of Theorem \ref{ULLN} and we show how it can be used to prove the hydrodynamic limits in Theorems \ref{strongweakhydro}. Finally, in Section \ref{StatDist} we prove Theorem \ref{uniquestat} via a coupling technique. 

\end{section}

\begin{section}{Uniform Quenched Large Deviations}\label{UQLDP}

In this Section, we will make the following assumption
\begin{asm}\label{Nestasm}
 $E_P [\rho_0^s] = 1$ for some $s>1$. 
\end{asm}

Quenched and averaged large deviation principles for a single RWRE are known in the setting we are considering. For speedup (that is, when $X_n \approx nv $ with $v>\vp$), both $\P(X_n > nv)$ and $P_\w(X_n > nv)$ decay exponentially fast \cite{cgzLDP} (although with different constants in the exponent). However, for slowdown (that is, when $X_n \approx nv$ with $v<\vp$) the averaged and quenched probabilities both decay sub-exponentially. In fact the averaged rate of decay is roughly $n^{1-s}$ and the quenched rate of decay is roughly $e^{-n^{1-1/s}}$. Precise statements (see \cite{dpzTE,gzQSubexp}) are, if $E_P[ \rho_0^s ] = 1$ for some $s>1$ then for any $v<\vp$
\begin{equation}\label{asubexp}
 \lim_{n\ra\infty} \frac{\log \P(X_n \leq nv)}{\log n} = 1-s, 
\end{equation}
and for any $\d>0$
\begin{equation}\label{qsubexp}
\liminf_{n\ra\infty} \frac{ \log P_\w(X_n < nv) }{n^{1-1/s+\d}} = 0,
\quad\text{and}\quad
 \limsup_{n\ra\infty} \frac{ \log P_\w(X_n < nv) }{n^{1-1/s-\d}} = -\infty,  \quad \P-a.s.
\end{equation}
A uniform analog (in the form of Proposition \ref{UQLDPslow}) of the first statement in \eqref{qsubexp} can be shown to hold quite easily.
Indeed,
\[
 P_\w\left( \exists y \in(An, Bn], i\leq n^\gamma : X_{n}^{y,i} - y < nv \right) \geq  P_\w(X_n^{Bn} - Bn < nv) = P_{\theta^{Bn}\w} (X_n < nv). 
\]
The proof of the quenched large deviation lower bounds in \cite{gzQSubexp} can then be repeated for the (deterministically) shifted environment $\theta^{Bn}\w$.

This section is devoted to the proof of Proposition \ref{UQLDPslow} which is the uniform analog of the second statement in \eqref{qsubexp}. 
We will prove Proposition \ref{UQLDPslow} by adapting the proof of quenched subexponential decay in \cite{gzQSubexp}. For clarity, we will divide the proof into a series of lemmas that progressively reduce the problem to an easier one. 
As was done in \cite{gzQSubexp}, we begin by reducing the proof of Proposition \ref{UQLDPslow} to the study of the large deviations of hitting times. 
For $x,y\in\Z$, let $T^{y}_x$ denote the amount of time it takes for the random walk started at $y$ to move a distance of $x$. That is,
\[
 T^{y}_x := \inf \{ n\geq 0: X_n^{y} = y+x \} \, .
\]
Moreover, it will be enough to prove large deviation upper bounds for hitting times on a dense enough subsequence of integers. We fix $\d>0$ for the remainder of the section and let $n_j:= \lfloor j^{2/\d} \rfloor$. 
\begin{lem}\label{Timelem}
Suppose that for any $A<B$ and any $\mu > \vp^{-1}$, 
\begin{equation} \label{subseqdecay}
 \limsup_{j\ra\infty} \frac{1}{n_j^{1-1/s-\d}} \log P_\w\left( \exists y \in(An_j, Bn_j]: T_{n_j}^{y} \geq n_j \mu \right) = -\infty, \quad P-a.s. 
\end{equation}
Then, for any $A<B$ and $v\in(0,\vp)$,
\begin{equation} \label{Xndecay}
 \limsup_{n\ra\infty} \frac{1}{n^{1-1/s-\d}} \log P_\w\left( \exists y \in(An, Bn] : X_{n}^{y,i} - y \leq nv \right) = -\infty, \quad P-a.s.
\end{equation}
Moreover, for any $\gamma < \infty$, \eqref{UXndecay} holds as well. 
\end{lem}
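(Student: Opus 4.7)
The plan is to deduce (\ref{Xndecay}) and its uniform-in-$i$ version (\ref{UXndecay}) from the subsequence hitting-time bound (\ref{subseqdecay}) in three reductions: position slowdowns to hitting-time slowdowns (at the cost of an exponentially small backtracking term); continuous $n$ to the subsequence $\{n_j\}$; and, for the moreover part, a union bound over $i\leq n^\gamma$.

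For the first reduction, choose $\e'>0$ with $v+\e'<\vp$ and set $k_n:=\lceil n(v+\e')\rceil$, so that $nv\leq k_n-n\e'$. On the event $\{X_n^{y,i}-y\leq nv\}$ either $T^{y,i}_{k_n}>n$, or the walk reached $y+k_n$ by time $n$ and (since steps are $\pm 1$) subsequently visited $y+k_n-\lceil n\e'\rceil$. By the strong Markov property at $T^{y,i}_{k_n}$ the second case has $P_\w$-probability at most $\pi_L(y+k_n,\w):=P_\w^{y+k_n}(\exists m:X_m=y+k_n-L)$ with $L=\lceil n\e'\rceil$. Iterating the standard one-step backtracking formula and using uniform ellipticity together with $E_P[\log\rho_0]<0$ (implied by Assumption \ref{Basm}), a Cram\'er-type bound plus a union bound over $|z|\leq Cn$ give, $P$-a.s.,
\[
\sup_{|z|\leq Cn}\pi_{\lceil n\e'\rceil}(z,\w)\leq e^{-cn}\qquad\text{for all large }n,
\]
where $c=c(\e')>0$. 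Summing over $y\in(An,Bn]$ and $i\leq n^\gamma$, the total backtracking contribution is at most $n^{\gamma+1}e^{-cn}=o\bigl(e^{-n^{1-1/s-\d}}\bigr)$.

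For the second reduction, given $n$ let $j=j(n)$ be the smallest index with $n_j\geq k_n$. By minimality $n(v+\e')>k_n-1\geq n_{j-1}$, so $n>n_{j-1}/(v+\e')$. Fix any $\mu\in\bigl(1/\vp,\,1/(v+\e')\bigr)$; because $n_{j-1}/n_j\to 1$ (since $n_j=\lfloor j^{2/\d}\rfloor$), for all sufficiently large $n$ we have $n\geq n_j\mu$. Monotonicity of hitting times then gives $\{T^{y,i}_{k_n}>n\}\subseteq\{T^{y,i}_{n_j}\geq n_j\mu\}$, and the range $(An,Bn]$ is eventually contained in $(A'n_j,B'n_j]$ for any fixed $A'<A/(v+\e')$, $B'>B/(v+\e')$. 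On a $P$-full-measure set obtained by intersecting the events from (\ref{subseqdecay}) over countably many rational parameters, for any $M$ and all large $n$,
\[
P_\w\bigl(\exists y\in(An,Bn]:T^{y,i}_{k_n}>n\bigr)\leq e^{-Mn_j^{1-1/s-\d}}\leq e^{-M'n^{1-1/s-\d}},
\]
using $n_j\asymp n$.

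For the third reduction, for each $i$ the family $\{X^{y,i}_\cdot\}_{y\in\Z}$ has the same joint quenched distribution as $\{X^{y,1}_\cdot\}_{y\in\Z}$, so a union bound over $i\leq n^\gamma$ multiplies the single-$i$ estimate by $n^\gamma$, which is harmlessly absorbed into the stretched-exponential decay. Combining the three steps gives (\ref{Xndecay}) and (\ref{UXndecay}). The main obstacle is the uniform backtracking estimate in Step 1: one must show $\pi_L(z,\w)\leq e^{-cL}$ \emph{uniformly} for $z$ in a polynomial window around the origin, which reduces to a Cram\'er-type large-deviation bound for an i.i.d.\ sum plus Borel--Cantelli. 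The remaining work is parameter management and the slow-growth fact $n_{j-1}/n_j\to 1$.
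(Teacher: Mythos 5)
Your proof is correct and uses the same three ingredients as the paper's sketch (which defers details to Gantert--Zeitouni, pages 181--182): monotonicity of hitting times, the slow growth $n_{j+1}/n_j\to 1$, exponential tails for backtracking, and a polynomial union bound over $i$. The only difference is the order of assembly---you reduce positions to a hitting-time event at distance $k_n\approx n(v+\e')$ first and then pass to the subsequence, whereas the paper first upgrades \eqref{subseqdecay} to the full-sequence hitting-time bound \eqref{Tndecay} and then converts to positions---but this is a cosmetic reordering, not a different argument.
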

\begin{proof}
 The proof that \eqref{subseqdecay} implies \eqref{Xndecay} is essentially the same as the argument given on pages 181-182 in \cite{gzQSubexp}, and thus we only give a brief sketch. First, from the monotonicity of hitting times and the fact that $\lim_{j\ra\infty} n_{j+1}/n_j = 1$ we can deduce that \eqref{subseqdecay} implies an analogous statement for large deviations of $T_n$: (not along a subsequence). 
\begin{equation} \label{Tndecay}
 \limsup_{n\ra\infty} \frac{1}{n^{1-1/s-\d}} \log P_\w\left( \exists y \in(An, Bn] : T_{n}^{y} \geq n \mu \right) = -\infty, \quad P-a.s.,
\end{equation}
The passage from large deviations of $T_n$ to the statement \eqref{Xndecay} is accomplished by the fact that the amount a random walk backtracks has exponential tails. That is, there exist constants $C, \theta>0$ such that $\P( T_{-x} < \infty ) \leq C e^{-\theta x}$ for all $x \geq 1$. 

Finally, for any $\gamma<\infty$, by a union bound and the fact that $\log( n^\gamma) = o(n^{1-1/s-\d})$ we obtain that \eqref{Xndecay} implies \eqref{UXndecay}. 
\end{proof}

By the above lemma, we may reduce ourselves to proving \eqref{Tndecay} for a fixed $\d>0$, $A<B$ and $\mu > \vp^{-1}$. 
To this end, let $k_j = \lfloor n_j^{1/s+\d}/(1-\e) \rfloor $ for some small $\e>0$. We next divide the environment into blocks of length $k_j$. Let 
\[
 \mathcal{K}_j := k_j \Z = \{ m k_j: m\in \Z \}.
\]
The proof of \eqref{qsubexp} in \cite{gzQSubexp} was accomplished by studying the induced random walk on the lattice $\mathcal{K}_j$. The averaged large deviation estimates \eqref{asubexp} were used to analyze the tails of the amount of time for the original random walk to produce a step in the induced random walk. Also, the fact that backtracking probabilities decay exponentially in the distance backtracked was used to control the number of steps the induced random walk ever backtracked. Our strategy in proving \eqref{Tndecay} will be to adapt the techniques used in \cite{gzQSubexp} to 
study to multiple induced random walks started at different locations in $\mathcal{K}_j$. 

For a fixed $A<B$, let
\[
 \mathcal{M}_j := \{ m\in\Z : m k_j \in (An_j - k_j, Bn_j] \}. 
\]
Thus, for any $y \in (An_j, Bn_j]$, there exists a unique $m\in \mathcal{M}_j$ such that $m k_j \leq y < (m+1) k_j$. 
The next lemma reduces the proof of \eqref{Tndecay} to the study of random walks started at points in $\mathcal{M}_j$. 
\begin{lem}\label{latticelem}
Let $\e>0$, and let $k_j$ and $\mathcal{M}_j$ be defined as above. Then, 
\begin{align*}
 & P_\w\left( \exists y \in(An_j, Bn_j]: T_{n_j}^{y} \geq n_j \mu \right) \leq k_j P_\w( \exists m\in \mathcal{M}_j : T_{n_j+k_j}^{m k_j} \geq  n_j \mu ) .
\end{align*}
\end{lem}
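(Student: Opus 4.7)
The plan is to reduce the left-hand event to a union of $k_j$ events indexed by the residue $r \in \{0,1,\dots,k_j-1\}$ of $y$ modulo $k_j$, and then couple each of those events into the right-hand event using the strong Markov property.

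First I would partition $(An_j, Bn_j]$ by residue. Any $y \in (An_j, Bn_j]$ can be written uniquely as $y = mk_j + r$ with $r \in \{0,\dots,k_j-1\}$, and the corresponding $m$ satisfies $mk_j = y - r \in (An_j - k_j, Bn_j]$, hence $m \in \mathcal{M}_j$. Consequently,
\[
\{\exists y \in (An_j, Bn_j] : T_{n_j}^{y} \geq n_j \mu\}
\;\subseteq\; \bigcup_{r=0}^{k_j-1} \{\exists m \in \mathcal{M}_j : T_{n_j}^{mk_j + r} \geq n_j \mu\}.
\]
A union bound then reduces the lemma to proving, for every fixed $r$,
\[
P_\w\bigl(\exists m \in \mathcal{M}_j : T_{n_j}^{mk_j+r} \geq n_j \mu\bigr)
\;\leq\; P_\w\bigl(\exists m \in \mathcal{M}_j : T_{n_j+k_j}^{mk_j} \geq n_j\mu\bigr).
\]

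To prove this single-$r$ inequality I would use a coupling built from the strong Markov property. Under $P_\w$ the walks $\{X^{mk_j}\}_{m \in \mathcal{M}_j}$ are independent. For each $m$, let $\tau_m := \inf\{n \geq 0 : X_n^{mk_j} = mk_j + r\}$ and define the shifted walk $\tilde X_n^{mk_j+r} := X_{\tau_m + n}^{mk_j}$. The strong Markov property gives that each $\tilde X^{mk_j+r}$ has the same $P_\w$-distribution as $X^{mk_j+r}$, and since the collection $\{X^{mk_j}\}_m$ is independent, so is $\{\tilde X^{mk_j+r}\}_m$. Hence the joint law of $\{\tilde X^{mk_j+r}\}_m$ coincides with the joint law of $\{X^{mk_j+r}\}_m$, so their respective existential events have equal $P_\w$-probability.

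It remains to show the pointwise containment
\[
\{\exists m : \tilde T_{n_j}^{mk_j+r} \geq n_j \mu\}
\;\subseteq\;
\{\exists m : T_{n_j+k_j}^{mk_j} \geq n_j \mu\},
\]
where $\tilde T_{n_j}^{mk_j+r}$ is the $n_j$-hitting time of the shifted walk. Because $X^{mk_j}$ is nearest-neighbor and $r + n_j < n_j + k_j$, it must visit $mk_j + r + n_j$ before reaching $mk_j + n_j + k_j$, so
\[
T_{n_j+k_j}^{mk_j} \;\geq\; T_{r+n_j}^{mk_j} \;=\; \tau_m + \tilde T_{n_j}^{mk_j+r} \;\geq\; \tilde T_{n_j}^{mk_j+r}.
\]
Thus any $m$ witnessing the left event also witnesses the right event, which yields the desired inequality for each $r$. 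Summing over the $k_j$ residues produces the claimed factor $k_j$. The main subtlety (rather than obstacle) is the bookkeeping that ensures the coupling preserves the joint distribution of the independent walks indexed by $m$, so that the probability on the coupled side genuinely equals the original $P_\w$-probability.
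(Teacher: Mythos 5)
Your proof is correct and takes essentially the same route as the paper: decompose $y = mk_j + r$ by residue $r \in \{0,\dots,k_j-1\}$, apply a union bound over the $k_j$ residues, and for each fixed residue use the strong Markov property at the hitting time of $mk_j + r$ together with the nearest-neighbor observation $r + n_j < n_j + k_j$ to dominate the event by $\{T_{n_j+k_j}^{mk_j} \geq n_j\mu\}$. The only implicit ingredient is that the hitting time $\tau_m$ of $mk_j+r$ is $P_\w$-a.s.\ finite (guaranteed by transience to $+\infty$), which is needed for your shifted walks to be well defined.
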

\begin{proof}
 First note that
\[
 P_\w( \exists y \in(An_j, Bn_j]: T_{n_j}^{y} \geq n_j \mu ) 
 \leq \sum_{l=0}^{k_j-1} P_\w( \exists m\in \mathcal{M}_j: T_{n_j}^{m k_j + l} \geq n_j \mu ) 
\]
Now, one way for the event $\{T_{n_j+k_j}^{mk_j} \geq n_j \mu \}$ to occur is if the random walk starting at $m k_j$ after first hitting $m k_j + l$ then takes more than $n_j \mu $ steps to reach $m k_j + l + n_j < (m+1)k_j + n_j$. Thus, the strong Markov property implies that 
\[
 P_\w( \exists m\in \mathcal{M}_j: T_{n_j}^{m k_j + l} \geq n_j \mu ) \leq P_\w( \exists m\in \mathcal{M}_j : T_{n_j+k_j}^{mk_j} \geq n_j \mu ) .
\]
Since this last term does not depend on $l$, the proof of the lemma is finished. 
\end{proof}

The following lemma is the key step in the proof of Proposition \ref{UQLDPslow}. 
\begin{lem}\label{keylemma}
 Let $\vp^{-1} < \mu' < \mu$ and let $\e < \frac{\mu-\mu'}{3\mu}$. Then, there exists a $\theta>0$ such that $P-a.s.$ for any $A<B$ and all $j$ sufficiently large,
 \[
  \max_{m\in \mathcal{M}_j} P_\w( T^{m k_j}_{n_j + k_j} \geq n_j \mu ) \leq e^{-\theta \e n_j/2} + e^{-\l \e \mu n_j^{1-1/s-\d}}, \qquad \forall \l > 0.  
 \]
\end{lem}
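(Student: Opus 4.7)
The plan is to adapt the single-walk quenched slowdown argument of~\cite{gzQSubexp} by decomposing the hitting time into a sum of block-crossing times that are independent under $P_\w$, and then applying a Chernoff bound at the block scale. Fix $m \in \mathcal{M}_j$ and set $N_j := \lceil (n_j + k_j)/k_j \rceil$, so that $N_j \sim n_j^{1-1/s-\d}$. By the strong Markov property applied at the successive first hitting times of the lattice points $mk_j,(m+1)k_j,\ldots$, one has
\[
T^{mk_j}_{n_j + k_j} \leq \sum_{l=0}^{N_j - 1} \tau^{(j)}_{m+l},
\]
where each $\tau^{(j)}_{l}$ has the quenched distribution of $T^{lk_j}_{k_j}$ and the summands are independent under $P_\w$. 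The two terms in the target bound correspond respectively to macroscopic backtracking and to a block-level Chernoff estimate on this sum.

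For the first term, let $\mathcal{B}_j$ be the event that the walk started at $mk_j$ ever descends below $mk_j - \e n_j/2$ before reaching $mk_j + n_j + k_j$. From the exponential averaged backtracking tail $\P(T_{-x} < \infty) \leq Ce^{-\theta x}$ (a consequence of Assumption~\ref{Basm}), together with Markov's inequality and a Borel--Cantelli step indexed by $j$, one deduces that $P_\w(\mathcal{B}_j) \leq e^{-\theta \e n_j/2}$ for $P$-a.e.\ $\w$ and all $j$ sufficiently large. This matches the first summand.

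For the second term, on $\mathcal{B}_j^c$ I would replace each $\tau^{(j)}_l$ by the killed version $\tilde{\tau}^{(j)}_l$ for the walk absorbed upon exiting $[lk_j - \e n_j/2, \infty)$, which preserves independence under $P_\w$, agrees with $\tau^{(j)}_l$ on $\mathcal{B}_j^c$, and has finite quenched moment generating function $\phi_l(\eta):=E_\w[e^{\eta \tilde{\tau}^{(j)}_l}]$ for all $\eta\geq 0$. Choose $\mu' \in (\vp^{-1}, \mu)$ and apply Chernoff with the block-scaled parameter $\eta = \l/k_j$:
\[
P_\w\Bigl(\sum_{l=0}^{N_j-1}\tau^{(j)}_{m+l} \geq n_j\mu ;\; \mathcal{B}_j^c\Bigr) \leq e^{-\l n_j\mu/k_j}\prod_{l=0}^{N_j-1}\phi_{m+l}(\l/k_j).
\]
The explicit expansion of $\phi_l(\eta)$ in terms of the ratios $\{\rho_x\}$ at block $l$ from~\cite{gzQSubexp}, the averaged bound $P(\phi_0(\l/k_j) > e^{\l \mu'}) = O(k_j^{1-s+o(1)})$ provided by Assumption~\ref{Nestasm}, and a Borel--Cantelli argument over the $O(n_j^{1-1/s-\d})$ blocks of the enlarged window $[An_j - k_j,(B+1)n_j + n_j]/k_j$ yield $\phi_l(\l/k_j) \leq e^{\l \mu'}$ uniformly in relevant $l$ for $P$-a.e.\ $\w$ and $j$ large. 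The right side then becomes $\exp\bigl(\l(N_j\mu' - n_j\mu/k_j)\bigr) \leq \exp(-\l N_j(\mu-\mu') + O(\l))$, and the hypothesis $\e < (\mu-\mu')/(3\mu)$ together with $N_j \sim n_j^{1-1/s-\d}$ gives a bound of $e^{-\l\e\mu n_j^{1-1/s-\d}}$ for $j$ large.

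The main obstacle is securing the uniform quenched mgf bound $\phi_l(\l/k_j) \leq e^{\l \mu'}$ for every block in the enlarged window simultaneously: in nestling environments the unrestricted quenched mgf may diverge through deep traps, so the killing construction on $\mathcal{B}_j^c$ is essential, and the environmental Borel--Cantelli step must be executed with the polynomial rate $k_j^{1-s+o(1)}$ from Assumption~\ref{Nestasm}. Since the union of blocks used by all $m \in \mathcal{M}_j$ is still of size $O(n_j^{1-1/s-\d})$, and $n_j = \lfloor j^{2/\d}\rfloor$ grows polynomially in $j$, the summability of the expected number of atypical blocks along the subsequence $n_j$ closes the argument, leveraging precisely the mgf expansion developed in~\cite{gzQSubexp}.
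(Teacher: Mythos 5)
Your route is genuinely different from the paper's. The paper does not decompose $T^{mk_j}_{n_j+k_j}$ as a sum of block-crossing times; it embeds the walk onto the lattice $\mathcal{K}_j$, writing $Y^{j,m}_i$ for the re-centred lattice walk and $\s^{j,m}(i)$ for the real time of its $i$-th step, and then splits
\[
P_\w\bigl(T^{mk_j}_{n_j+k_j}\geq n_j\mu\bigr)\;\leq\;
P_\w\bigl(Y^{j,m}_{N_j}<\lceil n_j/k_j\rceil+1\bigr)+P_\w\bigl(\s^{j,m}(N_j)>n_j\mu\bigr).
\]
The first term (too many embedded steps) is handled by comparing $Y^{j,m}$ to a biased simple random walk whose drift $1-e^{-\theta n_j^{1/s+\d}}$ comes from the backtracking estimate $\max_{i\in I_j}P_\w(T^{ik_j}_{-k_j}<T^{ik_j}_{k_j})\leq e^{-\theta n_j^{1/s+\d}}$; the second term is a Chernoff bound on $\s^{j,m}(N_j)$ using the quenched mgf of the \emph{exit time} of a $\pm k_j$ window, controlled by the adaptations of Lemmas~5--8 of \cite{gzQSubexp}. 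What that design buys is that the step times of the embedded walk only ever see an environment window of width $O(k_j)$, which is exactly the scale that the averaged estimate (\ref{asubexp}) and the Borel--Cantelli argument along $n_j$ control, and backtracking is absorbed into the count of left-steps of $Y^{j,m}$ rather than into the crossing-time variables themselves.

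There are two concrete gaps in your version. First, the killing does not match your event $\mathcal{B}_j$. You kill the block-$l$ crossing at level $lk_j-\e n_j/2$, but $\mathcal{B}_j^c$ only says the walk stays above $mk_j-\e n_j/2$, and for $l>m$ one has $lk_j-\e n_j/2>mk_j-\e n_j/2$; so on $\mathcal{B}_j^c$ the block-$l$ excursion can perfectly well dip below your killing level, and $\tilde\tau^{(j)}_l\neq\tau^{(j)}_l$ there. Since $\tilde\tau^{(j)}_l\leq\tau^{(j)}_l$ always, the substitution goes in the wrong direction. The fix is to define $\mathcal{B}_j$ blockwise (for some $0\leq l<N_j$, the block-$l$ excursion reaches $(m+l)k_j-\e n_j/2$) and pay a union-bound factor $N_j$ in the first term, which is harmless against the exponential. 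Second, your mgf estimate is not an import of \cite{gzQSubexp}: Lemmas~5--8 there bound the quenched mgf of the two-sided exit time from a window of size $O(k_j)$, and the averaged bound is $O(k_j^{1-s+o(1)})$ precisely because the worst trap seen has size $O(k_j)$. Your $\tilde\tau^{(j)}_l$ is an exit time from a window of size $\e n_j/2+k_j\gg k_j$; the walk can linger in deep traps in $[lk_j-\e n_j/2,lk_j)$ that the embedded walk never feels. Establishing $\phi_l(\l/k_j)\leq e^{\l\mu'}$ uniformly therefore needs its own argument (for instance, showing the contribution from the long left tail is negligible once a suitably small $\e$ is fixed), and this is the piece the paper avoids entirely by working with the embedded chain and its bounded-window step times. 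Unless both of these are repaired, the proof as written does not close.
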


\begin{proof}
For any $m \in\Z$, $j\geq 1$, and $i\geq 0$, define $\s^{j,m}(i)$ by 
\[
 \s^{j,m}(0) = 0, \qquad \s^{j,m}(i) = \inf \left\{ t > \s^{j,m}(i-1) : X^{m k_j}_t \in \mathcal{K}_j \backslash \{X^{m k_j}_{\s^{j,m}(i-1)} \} \right\}. 
\]
That is, $\s^{j,m}(i)$ are the times when the random walk started at $m k_j$ visits a point on the lattice $\mathcal{K}_j$ other than the one previously visited. 
Let $Y_i^{j,m} = \frac{1}{k_j}X^{m k_j}_{\s^{j,m}(i)} - m$ be the embeddeding of the random walk $X^{mk_j}_t$ on the lattice $\mathcal{K}_j$ re-centered to begin at the origin and scaled to have unit step sizes. 

Let $N_j := \lfloor n_j^{1-1/s-\d} \rfloor$. Then, if the random walk started at $m k_j$ has not gone $n_j + k_j$ steps to the right by time $n_j \mu$ then either the embedded random walk takes at least $N_j$ steps to move $n_j/k_j + 1$ steps to the right, or it takes more than $n_j \mu$ steps of the original random walk to record $N_j$ steps in the embedded random walk. Therefore, 
\begin{align}
 P_\w( T^{m k_j}_{n_j + k_j} \geq n_j \mu ) & \leq P_\w( \inf \{i: Y^{j,m}_i  = \lceil n_j /k_j \rceil + 1\}  > N_j ) + P_\w( \s^{j,m}(N_j) > n_j \mu ) \nonumber \\
 &\leq P_\w( Y^{j,m}_{N_j} < \lceil n_j /k_j \rceil + 1) + P_\w( \s^{j,m}(N_j) > n_j \mu ) \label{embedded}. 
\end{align}

Let $I_j := \mathcal{K}_j \cap ((A-1)n_j - k_j, (B+1)n_j]$ be the points of the lattice $\mathcal{K}_j$ that are possible to reach in $N_j$ steps of an embedded random walk started at a point in $\mathcal{M}_j$.  (Note that this definition of $I_j$ is different from the one in \cite{gzQSubexp}, but what is important is that $|\mathcal{I}_j| = \bigo( n_j/k_j )$ is still true). Then, it is possible to show (c.f. Lemma 6 in \cite{gzQSubexp}) that for any $\theta < -\frac{\log (E_P \rho)}{1-\e}$, $P-a.s.$ there exists a $J_1= J_1(\w,\theta, \e, \d)$ such that for all $j\geq J_1$
\[
 \max_{i\in I_j} P_\w ( T^{i k_j}_{-k_j} < T^{i k_j}_{k_j} ) \leq e^{-\theta n_j^{1/s+\d}}. 
\]
Let $S^{j,\theta}_i$ be a simple random walk with 
\[
 P( S_{i+1}^{j,\theta} = S_i^{j,\theta} + 1 | S_i^{j,\theta} ) = 1 - P( S_{i+1}^{j,\theta} = S_i^{j,\theta} - 1 | S_i^{j,\theta} ) = 1- e^{-\theta n_j^{1/s+\d}}. 
\]
Thus, for $j$ sufficiently large, $S_i^{j,\theta}$ is stochastically dominated by $Y^{j,m}_i$ for $i \leq N_j$. As in Lemma 9 of \cite{gzQSubexp}, large deviation estimates for the simple random walk $S_i^{j,\theta}$ can be used to show that for $\theta < -\frac{\log E_P \rho}{1-\e}$ and $j \geq J_1$, 
\begin{equation}\label{embedded1}
 P_\w( Y^{j,m}_{N_j } < \lceil n_j /k_j \rceil + 1) \leq e^{-\frac{\theta \e}{2} n_j}, \qquad \forall m\in\mathcal{M}_j. 
\end{equation}

A trivial modification of Lemmas 5 and 7 in \cite{gzQSubexp} provides an upper bound on the quenched tails of the amount of time it takes for a random walk starting at a point in $I_j$  to reach a neighboring point in $I_j$. These estimates are enough to imply that (cf. Lemma 8 in \cite{gzQSubexp}), $P-a.s.$, there exists a $J_0=J_0(\w, A, B, \e, \mu', \d)$ such that for all $j\geq J_0$, $m\in \mathcal{M}_j$ and $i\leq N_j$, 
\[
 E_\w e^{\l \s^{j,m}(i)/ k_j} \leq \left( e^{\l\mu'(1+\e)} + g_j(\l,\mu',\e, \d)  \right)^i, \qquad \forall \l>0,
\]
for some $g_j(\l,\mu',\e, \d) \ra 0$ as $j\ra\infty$. 
Therefore, for $j \geq J_0$, $m\in \mathcal{M}_j$, and $\l > 0$,
\begin{align*}
 P_\w( \s^{j,m}(N_j) > n_j \mu ) \leq e^{-\l \mu n_j/k_j} E e^{\l s^{j,m}(N_j)/k_j} \leq \left( e^{-\l \mu (1-\e)} ( e^{\l\mu'(1+\e)} + g_j(\l,\mu',\e, \d) ) \right)^{N_j}, 
\end{align*}
where in the second inequality we used that $n_j/k_j \leq (1-\e) N_j$ by the definitions of $k_j$ and $N_j$. 
The assumption that $\e < \frac{\mu-\mu'}{3\mu}$ and the fact that $g_j(\l,\mu',\e, \d) \ra 0$ as $j\ra\infty$ imply that $P-a.s.$ for $j$ sufficiently large,
\begin{equation}\label{embedded2}
 P_\w( \s^{j,m}(N_j) > n_j \mu ) \leq e^{-\l \e \mu N_j}, \qquad \forall m\in\mathcal{M}_j. 
\end{equation}
Applying \eqref{embedded1} and \eqref{embedded2} to \eqref{embedded} completes the proof of the lemma. 
\end{proof}
\begin{cor}\label{keycor}
For any $A<B$ and $\mu> \vp^{-1}$ and $\e < \frac{\mu-\vp^{-1}}{3\mu}$, 
\[
 \limsup_{j\ra\infty} \frac{1}{n_j^{1-1/s-\d}} \log P_\w( \exists m\in \mathcal{M}_j : T_{n_j + k_j}^{m k_j} \geq n_j \mu ) = -\infty, \qquad P-a.s.
\]
\end{cor}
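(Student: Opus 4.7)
The plan is to derive the corollary by a straightforward union bound over the finite set $\mathcal{M}_j$, fed with the per-starting-point estimate supplied by Lemma \ref{keylemma}. The first step is to translate the hypothesis $\e < (\mu - \vp^{-1})/(3\mu)$ into the hypothesis of Lemma \ref{keylemma}: since the interval $(\vp^{-1}, \mu - 3\mu\e)$ is nonempty, I pick any $\mu'$ in it, so that $\vp^{-1} < \mu' < \mu$ and $\e < (\mu-\mu')/(3\mu)$. Lemma \ref{keylemma} then provides a constant $\theta > 0$ and a $P$-almost sure event on which, for all $j$ large and every $\l > 0$,
\[
\max_{m \in \mathcal{M}_j} P_\w\bigl(T^{m k_j}_{n_j + k_j} \geq n_j \mu\bigr) \leq e^{-\theta \e n_j / 2} + e^{-\l \e \mu n_j^{1-1/s-\d}}.
\]

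Next I would apply a union bound. Since $k_j \asymp n_j^{1/s + \d}$, the set $\mathcal{M}_j$ has cardinality $|\mathcal{M}_j| \leq (B-A) n_j / k_j + 2 = \bigo(n_j^{1 - 1/s - \d})$, so in particular $\log |\mathcal{M}_j| = \bigo(\log n_j) = o(n_j^{1-1/s-\d})$. Because $n_j$ is of larger order than $n_j^{1 - 1/s - \d}$, the first exponential is negligible compared to the second for each fixed $\l$, so for all $j$ large,
\[
P_\w\bigl(\exists m \in \mathcal{M}_j : T^{m k_j}_{n_j + k_j} \geq n_j \mu\bigr) \leq 2 |\mathcal{M}_j| \, e^{-\l \e \mu n_j^{1-1/s-\d}}.
\]
Taking logarithms, dividing by $n_j^{1-1/s-\d}$, and sending $j \to \infty$ gives
\[
\limsup_{j\to\infty} \frac{1}{n_j^{1-1/s-\d}} \log P_\w\bigl(\exists m \in \mathcal{M}_j : T^{m k_j}_{n_j + k_j} \geq n_j \mu\bigr) \leq -\l \e \mu,
\]
$P$-almost surely. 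Since $\l > 0$ is arbitrary, letting $\l \to \infty$ yields the stated $-\infty$ limsup.

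There is no real obstacle here; the corollary is essentially a repackaging of Lemma \ref{keylemma}. The only item worth verifying is that the almost sure event on which the Lemma holds is uniform in the choice of $A < B$ defining $\mathcal{M}_j$, which is already the form in which Lemma \ref{keylemma} was stated. A minor bookkeeping detail is that the hypothesis $\e < (\mu - \vp^{-1})/(3\mu)$ enters only in the selection of $\mu'$, after which all constants depend only on $\e$, $\mu$, and $\mu'$.
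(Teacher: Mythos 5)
Your proof is correct and follows the same route as the paper: pick $\mu'\in(\vp^{-1},\,\mu-3\mu\e)$ so that the hypotheses of Lemma \ref{keylemma} hold, apply it, union-bound over $\mathcal{M}_j$ using $|\mathcal{M}_j|=\bigo(n_j/k_j)=\bigo(n_j^{1-1/s-\d})$ so the prefactor contributes only $o(n_j^{1-1/s-\d})$ after taking logarithms, and then let $\l\to\infty$. The only additional detail you supply (correctly) is the observation that the $e^{-\theta\e n_j/2}$ term is eventually dominated by the $e^{-\l\e\mu n_j^{1-1/s-\d}}$ term for each fixed $\l$, which the paper leaves implicit.
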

\begin{proof}
 Choose $\mu'\in(\vp,\mu)$ so that the hypothesis of Lemma \ref{keylemma} are satisfied. Then, Lemma \ref{keylemma} implies that $P-a.s.$,
 \[
  P_\w( \exists m\in \mathcal{M}_j : T_{n_j + k_j}^{m k_j} \geq n_j \mu ) \leq |\mathcal{M}_j| \left( e^{-\theta \e n_j/2} + e^{-\l \e \mu n_j^{1-1/s-\d}} \right),
 \]
for all $j$ sufficiently large. Since the above is true for any $\l>0$ and since $|\mathcal{M}_j| = \bigo( n_j^{1-1/s-\d} )$, the statement of the Corollary follows. 
\end{proof}

\begin{proof}[\textbf{Proof of Proposition \ref{UQLDPslow}:}]$\left.\right.$\\
 This follows directly from Lemmas \ref{Timelem}, \ref{latticelem} and Corollary \ref{keycor}. 
\end{proof}

\end{section}

\begin{section}{Uniform LLN and Hydrodynamic Limits for RWRE}\label{Hydro}
In this section, we apply Proposition \ref{UQLDPslow} to prove Theorem \ref{ULLN} and we then use Theorem \ref{ULLN} to prove hydrodynamic limits for the system of RWRE.

\begin{proof}[\textbf{Proof of Theorem \ref{ULLN}:}]
In \cite{cgzLDP}, it was shown that an averaged large deviation principle holds for $X_n/n$ with a convex rate function $I(v)$. Moreover, it was shown in \cite{cgzLDP} that $I(v) > 0$ for any $v>\vp$. Therefore, averaged probabilities of speedups ($\{X_n > nv\}$ for some $v>\vp$) decay exponentially fast. Since we are only concerned about $(B-A)n^{\gamma+1}$ particles, a union bound and the Borel-Cantelli Lemma imply that 
\[
 \limsup_{n\ra\infty} \sup_{y\in(An,Bn], \; i\leq n^\gamma} \frac{X^{y,i}_n - y}{n} \leq \vp, \quad \P-a.s. 
\]
It remains only to show the corresponding lower bound.  
\begin{equation}\label{Uslowdowns}
 \liminf_{n\ra\infty} \inf_{y\in(An,Bn], \; i\leq n^\gamma} \frac{X^{y,i}_n - y}{n} \geq \vp, \quad \P-a.s. 
\end{equation}
We divide the proof of \eqref{Uslowdowns} into three cases: 
Strictly positive drifts, no negative drifts, and both positive and negative drifts. 

\noindent\textbf{Case I: Strictly positive drifts} - $P( \w_0 > \frac{1}{2} + \e ) = 1$ for some $\e>0$.\\
In the case of strictly positive drifts, it was shown in \cite{cgzLDP} that $I(v) > 0 $ for all $v<\vp$ as well. Therefore, averaged probabilities of slowdowns ($\{X_n < nv\}$ for some $v<\vp$) decay exponentially fast as well. Again, a union bound and the Borel-Cantelli Lemma can be used to obtain \eqref{Uslowdowns}. 

\noindent\textbf{Case II: No negative drifts} - $P( \w_0 < \frac{1}{2} ) = 0$ but $P( \w_0 \leq \frac{1}{2} + \e) > 0$ for all $\e>0$.\\
When $P( \w_0 \leq \frac{1}{2} + \e) > 0$ for all $\e>0$, it was shown in \cite{cgzLDP} that $I(v) = 0$ for all $v\in[0,\vp]$. Therefore, averaged probabilities of slowdowns decay subexponentially. However, if $P(\w_0 = \frac{1}{2} ) > 0$, then it was shown in \cite{dpzTE} that for any $v\in (0,\vp)$,
\[
 \limsup_{n\ra\infty} \frac{1}{n^{1/3}} \log \P( X_n < nv) < 0. 
\]
That is, averaged probabilities of slowdowns decay on an exponential scale like $e^{-C n^{1/3}}$. 
If $P( \w_0 \leq \frac{1}{2} + \e) > 0$ for all $\e>0$ but $P(\w_0 = \frac{1}{2} ) = 0$, then a coupling argument implies that changing all the sites with $\w_x \in (1/2, 1/2+\e)$ to have $\w_x = 1/2$ instead only increases the probability of a slowdown. Therefore, the averaged probabilities of slowdowns still decrease at least as fast as $e^{-C n^{1/3}}$, and again a union bound and the Borel-Cantelli Lemma imply \eqref{Uslowdowns}.

\noindent\textbf{Case III: Positive and Negative Drifts} - $P( \w_0 < \frac{1}{2} ) > 0$.\\
As mentioned previously, in this case the averaged probabilities for slowdowns decay only polynomially fast and so the above strategy of a union bound and the Borel-Cantelli Lemma no longer works with the averaged measure. 
The uniform ellipticity assumption in Assumption \ref{IIDasm} implies that $t\mapsto E_P[ \rho_0^t ]$ is a convex function of $t$ and is finite for all $t$. 
Also, the assumption in this case that $P(\w_0 < 1/2) > 0$ is equivalent to $P(\rho_0 > 1) > 0$, so that $E_P[\rho_0^t] \ra \infty$ as $t\ra\infty$.  
Therefore, $E_P[ \rho_0^s ] = 1$ for some $s>1$, and we may apply the uniform quenched large deviation estimates from Proposition \ref{UQLDPslow}. That is, if
\[
 \Omega_{v,\d} := \left\{ P_\w\left( \max_{y\in(An, Bn], \; i \leq n^\gamma } X^{y,i}_n - y < nv \right) \leq e^{-n^{1-1/s-\d}} \text{ for all large } n \right\},
\]
then $P(\Omega_{v,\d}) = 1$ for any $v\in(0,\vp)$ and $\d>0$. Thus, if $\Omega_\d:= \bigcap_{v \in (0,\vp)\cap \Q} \Omega_{v,\d}$, we have that $P(\Omega_\d) = 1$. Moreover, a union bound and the Borel-Cantelli Lemma imply that for any $\w\in \Omega_\d$, 
\[
 \liminf_{n\ra\infty} \inf_{y\in(An,Bn], \; i\leq n^\gamma} \frac{X^{y,i}_n - y}{n} \geq \vp, \quad P_\w-a.s. 
\]
Since $P(\Omega_\d) = 1$, this implies that \eqref{Uslowdowns} holds as well. 
\end{proof}
\begin{rem}
There are known conditions for multi-dimensional RWRE in uniformly elliptic i.i.d\ environments that imply a law of large numbers with $\lim_{n\ra\infty} X_n/n =: \vp \neq \mathbf{0}$ (for example, Kalikow's condition or conditions $(\mathbf{T})$ and $(\mathbf{T}')$ of Sznitman \cite{sSlowdown, sConditionT, sEffective}). Under these conditions, it is known that the probabilities of large deviations decay faster than any polynomial \cite{bSlowdown,sEffective}. 
Thus, it is easy to see that under these conditions, the multi-dimensional analogue of Theorem \ref{ULLN} holds. 
\end{rem}

We now show how the uniform law of large numbers in Theorem \ref{ULLN} can be used to prove the hydrodynamic limits for the system of RWRE as stated in Theorem \ref{strongweakhydro}. 

\begin{proof}[\textbf{Proof of Theorem \ref{strongweakhydro}:}]
For any $g\in\Cb$ we may choose $a<b$ such that the support of $g$ is contained in $(a,b]$ and the sums and the integrals in both \eqref{hydroIC} and \eqref{hydroConc} may be restricted to $x\in(aN,BN]$ and $y\in(a,b]$, respectively. 
Note that the representation of $\eta_{Nt}^N$ in \eqref{etat} implies that
\begin{align}
 \frac{1}{N} \sum_{x= \lfloor N a \rfloor + 1} ^{ \lfloor N b \rfloor } \eta^N_{Nt}(x) g(x/N) 
&= \frac{1}{N} \sum_{x= \lfloor N a \rfloor + 1} ^{ \lfloor N b \rfloor } \sum_{y\in \Z}\sum_{i=1}^{\eta_0^N(y)} \indd{X_{Nt}^{y,i} = x} g(x/N) \nonumber \\
&= \frac{1}{N} \sum_{y\in \Z}\sum_{i=1}^{\eta_0^N(y)} \sum_{x= \lfloor N a \rfloor + 1} ^{ \lfloor N b \rfloor } \indd{X_{Nt}^{y,i} = x} g(x/N) \nonumber \\
&= \frac{1}{N} \sum_{y\in \Z}\sum_{i=1}^{\eta_0^N(y)} \indd{X_{Nt}^{y,i} \in( Na, Nb] } g(X_{Nt}^{y,i}/N) \label{indicatorsums}
\end{align}
Let $A_t = a-t\vp$ and $B= b-t\vp$. 
The law of large numbers implies that the indicator function on the last line above should be almost the same as $\indd{y\in(A_tN,B_tN]}$. We can make this precise by applying Theorem \ref{ULLN}. 
Let 
\[
 E_{N,t,a,b,\d} := \left\{ | X_{Nt}^{y,i} - y - N t \vp | < N t \d , \quad \forall y\in(Na-Nt, Nb+Nt], \quad \forall i\leq \eta^N_0(y) \right\}.
\]
We claim that for any for any $\d>0$, $\P-a.s.$, the event $E_{N,t,a,b,\d}$ occurs for all $N$ large enough. First, note that if $C > \int_{a-t}^{b+t} \prof(y) dy$ then
the assumptions on the initial configurations imply that, $\P-a.s.$, for all $N$ sufficiently large, $\eta^N_0(y) < CN$ for all $y \in((a-t)N,(b+t)N]$. 
Indeed, 
\begin{align*}
\limsup_{N\ra\infty} \frac{1}{N} \max_{y\in((a-t) N, (b+t) N]} \eta^N_0(y)  &\leq \lim_{N\ra\infty} \frac{1}{N} \sum_{x=\lfloor N(a-t) \rfloor + 1}^{\lfloor N(b+t) \rfloor} \eta_0^N(y) \\
&= \int_{a-t}^{b+t} \prof(y) dy < C, \qquad \P-a.s.
\end{align*}
This in turn implies by Theorem \ref{ULLN} that, $\P-a.s$, for any $\d>0$ the event $E_{N,t,a,b,\d}$ occurs for all $N$ sufficiently large.

Now, on the event $E_{N,t,a,b,\d}$, 
\[
 y\in ((A_t+\d)N,(B_t-\d)N] \Longrightarrow X_{Nt}^{y,i} \in( Na, Nb],
\]
and
\[
 X_{Nt}^{y,i} \in( Na, Nb] \Longrightarrow y\in ((A_t-\d)N,(B_t+\d)N].
\]
Note that for the second implication above we used that the random walks are nearest neighbor random walks. 
Recalling \eqref{indicatorsums}, for any $\d>0$ and for all $N$ large enough, $\P-a.s.$,
\begin{align}
 \frac{1}{N} \sum_{x= \lfloor N a \rfloor + 1} ^{ \lfloor N b \rfloor } \eta^N_{Nt}(x) g(x/N) 
&= \frac{1}{N} \sum_{y = \lfloor NA_t \rfloor + 1}^{\lfloor NB_t \rfloor }\sum_{i=1}^{\eta_0^N(y)}  g(X_{Nt}^{y,i}/N) \nonumber \\
&\quad - \frac{1}{N} \sum_{y = \lfloor N(A_t-\d)\rfloor + 1}^{\lfloor N(A_t+\d) \rfloor} \sum_{i=1}^{\eta_0^N(y)} \mathbf{1}_{\{X_{Nt}^{y,i} \notin( Na, Nb] \}} g(X_{Nt}^{y,i}/N) \nonumber \\
&\quad - \frac{1}{N} \sum_{y = \lfloor N(B_t-\d)\rfloor + 1}^{\lfloor N(B_t + \d) \rfloor} \sum_{i=1}^{\eta_0^N(y)} \mathbf{1}_{\{X_{Nt}^{y,i} \notin( Na, Nb] \}} g(X_{Nt}^{y,i}/N). \label{Erewrite}
\end{align}
On the event $E_{N,t,a,b,\d}$, 
\begin{equation} \label{modcont}
 | g(X_{Nt}^{y,i}/N) - g(y/N+\vp t) | \leq \gamma(g,\d), \qquad \forall y\in((a-t)N, (b+t)N],
\end{equation}
where $\gamma(g,\d) = \sup \{ |g(x)-g(y)|: |x-y| < \d \}$ is the modulous of continuity of the function $g$. 
Recalling \eqref{Erewrite}, for any $\d>0$ and for all $N$ large enough, $\P-a.s.$,
\begin{align*}
 & \left| \frac{1}{N} \sum_{x= \lfloor N a \rfloor + 1} ^{ \lfloor N b \rfloor } \eta^N_{Nt}(x) g(x/N) - \int_a^b \prof(y - \vp t) g(y) dy \right| \\
&\quad \leq \left| \frac{1}{N} \sum_{y = \lfloor NA_t \rfloor + 1}^{\lfloor NB_t \rfloor } \eta_0^N(y)  g(y/N + \vp t) - \int_a^b \prof(y - \vp t) g(y) dy \right|  + \frac{1}{N} \sum_{y = \lfloor N A_t\rfloor + 1}^{\lfloor N B_t \rfloor } \eta_0^N(y) \gamma(g,\d) \\
&\qquad + \frac{1}{N} \sum_{y = \lfloor N(A_t-\d)\rfloor + 1}^{\lfloor N(A_t+\d) \rfloor} \eta_0^N(y) \|g\|_\infty + \frac{1}{N} \sum_{y = \lfloor N(B_t-\d)\rfloor + 1}^{\lfloor N(B_t + \d) \rfloor} \eta_0^N(y) \|g\|_\infty.
\end{align*}
Recalling that $A_t=a-t\vp$ and $B_t=b-t\vp$, we may do a change of variables to re-write
\[
 \int_a^b \prof(y - \vp t) g(y) dy = \int_{A_t}^{B_t} \prof(y) g(y+\vp t) dy. 
\]
Thus, the assumptions on the initial configurations imply that $\P-a.s.$,
\begin{align*}
& \limsup_{N\ra\infty} \left| \frac{1}{N} \sum_{x= \lfloor N a \rfloor + 1} ^{ \lfloor N b \rfloor } \eta^N_{Nt}(x) g(x/N) - \int_a^b \prof(y - \vp t) g(y) dy \right| \\
&\quad \leq \gamma(g,\d) \int_{A_t}^{B_t} \prof(y)dy + \|g\|_\infty \int_{A_t-\d}^{A_t + \d} \prof(y)dy + \|g\|_\infty \int_{B_t-\d}^{B_t + \d} \prof(y)dy. 
\end{align*} 

Since $g$ is uniformly continuous and $\prof$ is a bounded function, the right hand side can be made arbitrarily small by taking $\d\ra 0$. This proves \eqref{hydroConc} and thus finishes the strong version of the hydrodynamic limit. 
The proof of the weaker version of they hydrodynamic limit where \eqref{hydroIC} and \eqref{hydroConc} both hold in $P_\w$-probability is similar and is thus omitted. 
\end{proof}

\end{section}

\begin{section}{Stationary Distribution of the Particle Process}\label{StatDist}
We now change our focus away from the spatial scaling present in hydrodynamic limits and instead study the limiting distribution of particle configurations $\eta_n$ as $n\ra\infty$.
Recall that the initial configurations we are considering are such that given $\w$ the $\eta_0(x)$ are independent with distribution $\nu(\theta^x\w)$ where $\nu$ is a measurable function $\nu:\Omega\ra \Upsilon$ from the space of environments to the space of probability measures on $\Zp$. We begin with a couple of easy lemmas giving some properties of the system of RWRE under such initial conditions.


\begin{lem}\label{ergodic}
 Let $\nu: \Omega \ra \Upsilon$. Then the sequence $\{ \eta_0(x) \}_{x\in\Z}$ is ergodic under the measure $\P_{\nu}$. 
\end{lem}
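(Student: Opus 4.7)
The plan is to realize $\eta_0$ as a factor of a two-sided i.i.d.\ system and then invoke the fact that factors of ergodic systems are ergodic.

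First I would reduce the measurability of $\nu:\Omega\to\Upsilon$ to a concrete randomization. Because $\Upsilon$ is the Polish space of probability measures on $\Zp$, there exists a jointly measurable function $F:\Omega\times[0,1]\to\Zp$ such that for every $\w$, if $U$ is uniform on $[0,1]$, then $F(\w,U)$ has law $\nu(\w)$ (take, e.g., $F(\w,u)=\min\{k:\sum_{j\le k}\nu(\w)(\{j\})\ge u\}$, which is measurable by measurability of $\w\mapsto\nu(\w)$ in the weak-$*$ topology). Enlarging the probability space, I introduce an i.i.d.\ family $\{U_x\}_{x\in\Z}$ of uniform$[0,1]$ random variables, independent of $\w$, and set
\[
\eta_0(x) \;=\; F(\theta^x\w,\,U_x),\qquad x\in\Z.
\]
Given $\w$, the random variables $\{\eta_0(x)\}_{x\in\Z}$ are then independent with $\eta_0(x)\sim\nu(\theta^x\w)$, so this representation has the same joint law under $\P_\nu$ as the one defined in the paper.

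Next I consider the product shift $T$ acting on $\Omega\times[0,1]^\Z$ by $T(\w,U)=(\theta\w,\theta U)$, where $\theta$ is the one-step shift on each coordinate. Under the product measure $P\otimes\mathrm{Leb}^{\otimes\Z}$, the sequence $\{(\w_x,U_x)\}_{x\in\Z}$ is i.i.d., hence $T$ is measure-preserving and mixing, and in particular ergodic. Define $\Phi:\Omega\times[0,1]^\Z\to\Zp^\Z$ by $\Phi(\w,U)_x=F(\theta^x\w,U_x)$. A direct check gives the intertwining $\Phi\circ T=\theta_*\circ\Phi$, where $\theta_*$ denotes the shift on $\Zp^\Z$. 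Thus $\Phi$ is a factor map from $(\Omega\times[0,1]^\Z,T,P\otimes\mathrm{Leb}^{\otimes\Z})$ onto $(\Zp^\Z,\theta_*,\P_\nu\!\upharpoonright\!\sigma(\eta_0))$.

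Since $T$ is ergodic and any factor of an ergodic measure-preserving system is ergodic, the sequence $\{\eta_0(x)\}_{x\in\Z}$ is ergodic under $\P_\nu$. The only point that requires a bit of care is the existence of the jointly measurable $F$; I would dispatch this with the explicit quantile construction above. Everything else is routine, and no genuine obstacle is expected.
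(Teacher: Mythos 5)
Your proposal is correct and follows essentially the same route as the paper: the paper also constructs $\eta_0(x)=F(\nu(\theta^x\w),U_x)$ via the quantile function applied to an auxiliary i.i.d.\ uniform sequence, and concludes ergodicity because $\eta_0$ is a shift-equivariant measurable function (factor) of the ergodic sequence $\{(\theta^x\w,U_x)\}_{x\in\Z}$.
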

\begin{proof}
 Let $F:\Upsilon \times [0,1] \ra \Zp$ be defined by
\[
 F(Q,u):= \inf \{ n \in \Zp : Q([0,n]) \geq u \}.
\]
$F$ is a measureable function, 
and if $U\sim U(0,1)$ then $F(Q,U)$ has distribution $Q$ for any $Q\in\Upsilon$. Now, let 
$\{U_x \}_{x\in \Z}$ be an \iid sequence of uniform $[0,1]$ random variables that are also independent of the random environment $\w = \{\w_x \}_{x\in\Z}$. Then, the joint sequence $\{ (\theta^x \w, U_x) \}_{x\in\Z}$ is ergodic. Finally, we can construct $\eta_0$ by letting
\[
 \eta_0(x) = F(\nu(\theta^x\w), U_x) . 
\]
Since $\eta_0$ can be constructed as a measurable function of an ergodic sequence which respects shifts of the original sequence, $\eta_0$ is ergodic as well. 
\end{proof}

\begin{lem} \label{statmean}
 Let $\nu: \Omega \ra \Upsilon$. Then $\E_\nu( \eta_0(0) ) = \E_\nu( \eta_n(0) )$ for all $n\in\N$. 
\end{lem}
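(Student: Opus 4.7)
The plan is to exploit two complementary forms of translation invariance: the spatial stationarity of the i.i.d.\ environment $P$ under the shifts $\theta^x$, and the fact that in a fixed environment $\w$, the walk started at $y$ has the same law as the walk started at $0$ in the shifted environment $\theta^y\w$.

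First I would expand $\eta_n(0)$ using the representation \eqref{etat} with $x=0$, take $\E_\nu$, and invoke Tonelli's theorem (everything is non-negative, so no integrability is needed here) to bring the double sum outside. Conditioning on the environment $\w$ and on the initial occupation variables $\{\eta_0(y)\}_{y\in\Z}$, and using that the paths $\{X^{y,i}\}_i$ are, given $\w$, independent of $\eta_0(y)$, I would obtain
\begin{equation*}
 \E_\nu[\eta_n(0)] = \sum_{y\in\Z} E_P\bigl[\, m(\theta^y\w)\, P_\w(X_n^y = 0)\,\bigr],
\end{equation*}
where $m(\w) := \sum_{k\geq 0} k\,\nu(\w)(\{k\})$ is the mean of the distribution $\nu(\w)$ on $\Zp$. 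Note $E_P[m(\w)] = \E_\nu[\eta_0(0)]$, which is the right-hand side of the claim.

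Second, I would apply the translation identity $P_\w(X_n^y = 0) = P_{\theta^y\w}(X_n = -y)$, which is immediate from the definition of the quenched dynamics. Combining this with the stationarity of $P$ under $\theta^y$ (which converts $E_P[F(\theta^y\w)]$ into $E_P[F(\w)]$) yields
\begin{equation*}
 E_P\bigl[\, m(\theta^y\w)\, P_\w(X_n^y = 0)\,\bigr] = E_P\bigl[\, m(\w)\, P_\w(X_n = -y)\,\bigr].
\end{equation*}

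Finally, I would sum over $y\in\Z$. Another application of Tonelli puts the sum inside the expectation, after which $\sum_{y\in\Z} P_\w(X_n = -y) = 1$ since at time $n$ the walk is almost surely at some integer. This gives $\E_\nu[\eta_n(0)] = E_P[m(\w)] = \E_\nu[\eta_0(0)]$. I do not foresee a genuine obstacle: the argument is essentially a mass-balance computation, and the only subtlety is ensuring the interchange of sum and expectation, which is handled by non-negativity and Tonelli throughout. If $\E_\nu[\eta_0(0)] = \infty$ the identity holds as $\infty = \infty$; otherwise both sides are finite and equal.
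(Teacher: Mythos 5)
Your argument is correct and is essentially the paper's own proof: decompose $E_{\w,\nu^\w}(\eta_n(0))$ as $\sum_y E_{\w,\nu^\w}(\eta_0(y))P_\w(X_n^y=0)$, rewrite via the shift identity $P_\w(X_n^y=0)=P_{\theta^y\w}(X_n=-y)$, use shift invariance of $P$, and sum the transition probabilities to $1$. Your added care with Tonelli and the $\infty=\infty$ case is fine but does not change the route.
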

\begin{proof}
For any environment $\w$ and $n\in\N$,
 \[
  E_{\w,\nu^\w}( \eta_n(0) ) = \sum_{x\in\Z} E_{\w,\nu^\w}( \eta_0(x) )P_\w( X_n^x = 0 ) = \sum_{x\in\Z} E_{\theta^x\w,\nu^{\theta^x\w}}( \eta_0(0) )P_{\theta^x\w}( X_n = -x ). 
 \]
Therefore, the shift invariance of $P$ implies that 
\begin{align*}
 \E_{\nu}( \eta_n(0) ) = E_P\left[ \sum_{x\in\Z} E_{\w,\nu^{\w}}( \eta_0(0) )P_{\w}( X_n = -x ) \right] 
= E_P\left[ E_{\w,\nu^{\w}}( \eta_0(0) ) \right] = \E_\nu ( \eta_0(0)). 
\end{align*}
\end{proof}

Recall the definitions of $\pi_\a: \Omega \ra \Upsilon$ and $f(\w)$ from \eqref{fdef}. 
The significance of the functions $\pi_\a$ is that the distributions $\pi_\a^\w$ are stationary under the (quenched) dynamics of the system of RWRE. 
\begin{lem}\label{StationaryDist}
Let $\pi_\a$ be defined as in \eqref{fdef} for some $\a>0$. Then, for $P-a.e.$ environment $\w$, $\pi_\a^{\w}$ is a stationary distribution for the sequence of random variables $\eta_n$. That is, if $\eta_0 \sim \pi_\a^{\w}$, then for any $n\in\N$, $\eta_n \sim \pi_\a^{\w}$ as well. 
\end{lem}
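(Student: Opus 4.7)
The plan is to combine two ingredients: the thinning/superposition property of independent Poisson fields, and a direct computation showing that $x\mapsto f(\theta^x\w)$ is a harmonic measure (i.e. an invariant measure) for the quenched nearest-neighbor transition kernel. Because the starting field $\eta_0$ is a product of Poissons and the random walks are independent of everything, the evolved field $\eta_n$ will automatically be a product of Poissons whose parameters are given by the $n$-step pushforward of the starting intensity; stationarity then reduces to the statement that $\{f(\theta^x\w)\}_{x\in\Z}$ is fixed by this pushforward.

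More concretely, first I would recall the standard fact about Poisson processes: if $\{N_y\}_{y\in\Z}$ are independent with $N_y\sim\text{Poisson}(\lambda_y)$ and each of the $N_y$ points at site $y$ is independently routed to a site $x$ with probability $p_\w^n(y,x):=P_\w(X_n^y=x)$, then the resulting counts $\{M(x)\}_{x\in\Z}$ are again independent and $M(x)\sim\text{Poisson}\bigl(\sum_y \lambda_y\,p_\w^n(y,x)\bigr)$. Independence across sources, independence of the routings, and the joint-MGF calculation yield both the Poisson marginals and the independence across destinations. Applied to $\lambda_y=\a f(\theta^y\w)$, this tells us that $\eta_n$ is a product Poisson field with intensities $\mu_n(x):=\a\sum_y f(\theta^y\w)\,p_\w^n(y,x)$. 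Hence it suffices to show $\mu_n(x)=\a f(\theta^x\w)$ for every $x$, and by a one-step induction this reduces to the case $n=1$.

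The heart of the matter is the identity
\[
\w_{x-1}f(\theta^{x-1}\w)+(1-\w_{x+1})f(\theta^{x+1}\w)=f(\theta^x\w),\qquad x\in\Z.
\]
I would verify this by reorganizing the series in \eqref{fdef}. Writing $g(x):=f(\theta^x\w)$, the telescoping of the geometric-looking sum gives the clean identity $\w_x g(x)=1+\rho_{x+1}\w_{x+1}g(x+1)=1+(1-\w_{x+1})g(x+1)$, where I use $\rho_{x+1}\w_{x+1}=1-\w_{x+1}$. Shifting indices, $\w_{x-1}g(x-1)=1+(1-\w_x)g(x)$. Adding $(1-\w_{x+1})g(x+1)$ to both sides and using the first identity to rewrite $1+(1-\w_{x+1})g(x+1)=\w_x g(x)$, one arrives at $\w_{x-1}g(x-1)+(1-\w_{x+1})g(x+1)=(1-\w_x)g(x)+\w_x g(x)=g(x)$, as desired. (This calculation requires $E_P[\rho_0]<1$ only implicitly, through the $P$-a.s.\ finiteness of the series defining $f$; I would invoke this at the outset to make sense of the rearrangements.)

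The main obstacle is really only the bookkeeping in the invariance identity above; the Poisson-field part is routine provided one is careful to note that independence across different destinations (not only thinning at a single source) is preserved. Once the identity is in hand, the $n=1$ case of $\mu_n(x)=\a f(\theta^x\w)$ is immediate from $p_\w^1(x-1,x)=\w_{x-1}$ and $p_\w^1(x+1,x)=1-\w_{x+1}$, and iterating the one-step argument (or equivalently applying $\mu_n=\mu_{n-1}P_\w$) delivers stationarity for all $n$.
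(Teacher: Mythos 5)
Your argument is correct and is exactly the one the paper has in mind: the paper defers to Chayes--Liggett and records only the key invariance identity $f(\theta^x\w)=\w_{x-1}f(\theta^{x-1}\w)+(1-\w_{x+1})f(\theta^{x+1}\w)$, which you derive the same way (via $\w_x g(x)=1+(1-\w_{x+1})g(x+1)$) and then combine with the standard Poisson thinning/superposition argument. You have simply written out in full the steps the paper declares ``essentially the same'' as the continuous-time case.
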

The analog of Lemma \ref{StationaryDist} for a system of continuous time RWRE was previously shown by Chayes and Liggett in \cite{clEPRE}. The proof for the discrete time model is essentially the same and is therefore ommitted. The key observation is that $f(\theta^x \w) = \w_{x-1}f(\theta^{x-1}\w) + (1-\w_{x+1}) f(\theta^{x+1}\w)$ for all $x\in\Z$, which can easily be checked by the definition of $f$ in \eqref{fdef}.

\begin{subsection}{The coupled process}
To complete the proof of Theorem \ref{uniquestat} we will introduce a coupling of two systems of RWRE in the same environment. 
Let $\nu, \s: \Omega \ra \Upsilon$ be measurable functions, and let $\eta_t$ and $\zeta_t$ be two systems of independent RWRE with initial configurations $\P_\nu$ and $\P_\s$ respectively. We will introduce a coupling of two systems of RWRE, $\eta_t$ and $\zeta_t$, that have marginal distributions $\P_\nu$ and $\P_\s$, respectively, and which maximizes the agreement between the two processes. We will follow the coupling procedure outlined in \cite{jsCPDE} (also in \cite{sTIESbook}). To this end, given $\w$, let $\eta_0$ and $\zeta_0$ be independent with distributions $\nu^\w$ and $\s^\w$, respectively.
Then, given the initial configurations $(\eta_0,\zeta_0)$, define
\begin{equation} \label{initialcouple}
 \xi_0(x) := \eta_0(x) \wedge \zeta_0(x), \quad \beta_0^+ = ( \eta_0(x)-\zeta_0(x) )^+, \quad\text{and}\quad \beta_0^- = ( \eta_0(x)-\zeta_0(x) )^- \;.
\end{equation}
$\xi_0(x)$ is the number of common particles at site $x$, and $\beta^{\pm}(x)$ is the excess number of $\eta_0$ or $\zeta_0$ particles at $x$. We will refer to the unmatched $\eta_0$ or $\zeta_0$ particles as $+$ or $-$ particles, respectively. (To make this rigorous, the particles in the initial configurations should be well ordered in some predetermined way and then the matchings at each site should be done with lowest labels matched first). At each time step the matched particles move together according to the law $P_\w$, while the excess $+$ and $-$ particles each move independently according to the law $P_\w$. After all the particles have moved we again match as many pairs of $+$ and $-$ particles at each site as possible. 

After $n$ time steps we denote the number of matched and unmatched $+$ or $-$ particles by
\begin{equation} \label{timecouple}
 \xi_n(x) := \eta_n(x) \wedge \zeta_n(x), \quad \beta_n^+ = ( \eta_n(x)-\zeta_n(x) )^+, \quad\text{and}\quad \beta_n^- = ( \eta_n(x)-\zeta_n(x) )^- \;.
\end{equation}
We will denote the quenched and averaged distributions of the coupled process $(\eta_n, \zeta_n)$ by $P_{\w,\nu^\w\times \s^\w}$ and $\P_{\nu\times \s}$, respectively. 
An easy adaptation of the proof of Lemma \ref{ergodic} shows that the joint sequence $\{ (\eta_0(x), \zeta_0(x)) \}_{x\in\Z}$ is ergodic under $\P_{\nu\times \s}$. Therefore, from \eqref{initialcouple} it is clear that the triple $\{(\xi_0(x), \beta_0^+(x), \beta_0^-(x))\}_{x\in\Z}$ is an ergodic sequence under $\P_{\nu\times\s}$ as well. 
\begin{lem}\label{ergodicbeta}
 Let $\nu,\s: \Omega \ra \Upsilon$. Then, for any $n\in\N$ the triple $\{(\xi_n(x), \beta_n^+(x), \beta_n^-(x))\}_{x\in\Z}$ is ergodic under the measure $\P_{\nu\times\s}$.
\end{lem}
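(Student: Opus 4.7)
The plan is to enlarge the probability space so that the spatial shift becomes a measure-preserving ergodic transformation of the whole space, and then to check that the coupled dynamics is equivariant under this shift. Once this is done, for any fixed $n$ the triple $\{(\xi_n(x),\beta_n^+(x),\beta_n^-(x))\}_{x\in\Z}$ appears as a shift-equivariant measurable function of an ergodic system, and hence is itself ergodic.

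Concretely, I would carry on the enlarged space the environment $\w$ together with three mutually independent families of i.i.d.\ uniform $[0,1]$ random variables indexed by $x\in\Z$: variables $\{U^\eta_x\}$ and $\{U^\zeta_x\}$ used, via the map $F$ from the proof of Lemma \ref{ergodic}, to manufacture the initial configurations $\eta_0(x) = F(\nu(\theta^x\w),U^\eta_x)$ and $\zeta_0(x) = F(\s(\theta^x\w),U^\zeta_x)$, together with step-noise $\{V^{x,i}_k\}_{x\in\Z,\,i\in\Z_+,\,k\in\N}$ driving the walk $X^{x,i}_\cdot$ via ``move right at time $k$ iff $V^{x,i}_k<\w_{X^{x,i}_{k-1}}$''. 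The coupled dynamics \eqref{etat}, \eqref{initialcouple}, \eqref{timecouple} then makes $(\xi_n,\beta_n^+,\beta_n^-)$ a measurable function of these ingredients, and the pushforward on the coupled process is $\P_{\nu\times\s}$. Define the shift $S$ on the enlarged space by $(S\w)_x=\w_{x+1}$, $(SU^\eta)_x=U^\eta_{x+1}$, $(SU^\zeta)_x=U^\zeta_{x+1}$ and $(SV)^{x,i}_k=V^{x+1,i}_k$. By Assumption \ref{IIDasm} and construction of the noise, the law on the enlarged space is an infinite product indexed by $x\in\Z$, so $S$ is a Bernoulli shift, hence measure-preserving and ergodic.

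The key verification is shift-equivariance of the dynamics. A one-line induction on $k$ shows that the walk $\tilde X^{x,i}_\cdot$ built from the shifted noise in the shifted environment satisfies $\tilde X^{x,i}_k=X^{x+1,i}_k-1$, since the right-step rule $(SV)^{x,i}_k<(S\w)_{\tilde X^{x,i}_{k-1}}$ rewrites as $V^{x+1,i}_k<\w_{\tilde X^{x,i}_{k-1}+1}$. Similarly $\tilde\eta_0(x)=\eta_0(x+1)$ and $\tilde\zeta_0(x)=\zeta_0(x+1)$ from the definition of $F$ and the identity $\theta^x(S\w)=\theta^{x+1}\w$. Plugging into \eqref{etat} gives $\tilde\eta_n(x)=\eta_n(x+1)$ and $\tilde\zeta_n(x)=\zeta_n(x+1)$, whence by \eqref{timecouple},
\[
 (\tilde\xi_n(x),\tilde\beta_n^+(x),\tilde\beta_n^-(x)) = (\xi_n(x+1),\beta_n^+(x+1),\beta_n^-(x+1)).
\]
Thus the map from the enlarged space to the $\Z$-indexed sequence of triples intertwines $S$ with the standard coordinate shift, so every shift-invariant event for $\{(\xi_n(x),\beta_n^+(x),\beta_n^-(x))\}_{x\in\Z}$ pulls back to an $S$-invariant event, which is trivial.

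I do not anticipate a genuine obstacle: the argument is the standard principle that ergodicity passes to shift-equivariant factors. The only care needed is in introducing the noise variables in an index-respecting way so that the spatial shift simultaneously acts as a product (hence Bernoulli) shift on the inputs and as the standard coordinate shift on the resulting particle configurations; once the noise is indexed by starting location in the manner above, this is bookkeeping.
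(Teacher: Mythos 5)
Your overall strategy---ergodicity passes to shift-equivariant factors of an i.i.d.\ (hence Bernoulli, hence ergodic) system---is exactly the paper's strategy, and the shift-equivariance bookkeeping you carry out is correct for the process you construct. The gap is that the process you construct is not the coupled process $(\eta_n,\zeta_n)$ whose law is $\P_{\nu\times\s}$.

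You drive each walk $X^{x,i}$ by its own noise family $\{V^{x,i}_k\}_k$, indexed by the \emph{starting} site $x$ and label $i$, and then you set $\eta_n(x)=\sum_{y,i\leq\eta_0(y)}\indd{X^{y,i}_n=x}$, $\zeta_n(x)=\sum_{y,i\leq\zeta_0(y)}\indd{X^{y,i}_n=x}$, and read off $(\xi_n,\beta_n^\pm)$ via \eqref{timecouple}. In this construction a pair of particles move together from time $0$ onward if and only if they share the same origin $(y,i)$, and otherwise they evolve with independent noise forever. But the coupling defining $\P_{\nu\times\s}$ re-matches after every step: an unmatched $+$ and an unmatched $-$ particle that land on a common site are paired at that moment and move together thereafter. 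Your construction never performs this re-matching, so the two constructions produce different joint laws of $(\eta_n,\zeta_n)$ for $n\geq 2$ (e.g., with $\eta_0=\delta_0$, $\zeta_0=\delta_2$, the probability that $\eta_2\equiv\zeta_2$ is strictly larger under the paper's coupling than under yours), and hence different laws of $(\xi_n,\beta_n^+,\beta_n^-)$. So the statement ``the pushforward on the coupled process is $\P_{\nu\times\s}$'' is false, and you have proved ergodicity of the wrong triple.

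This is not a harmless substitution: the very property that the subsequent Corollary \ref{monotone} extracts from this lemma---that $\E_{\nu\times\s}[\beta_n^\pm(0)]$ is non-increasing in $n$---relies on matched pairs staying matched, which fails for your coupling. The correct way to realize the coupling as a shift-equivariant factor is the one the paper uses: index the step-noise by the \emph{current} site $z$, the time step $n$, and the particle's current matched/unmatched status, i.e.\ take variables $Y^{z,0}_n(j),Y^{z,+}_n(j),Y^{z,-}_n(j)$ with law $\w_z\d_1+(1-\w_z)\d_{-1}$, and feed them to the $j$-th matched, unmatched $+$, and unmatched $-$ particle found at site $z$ at time $n$ (after re-matching, under some fixed well-ordering). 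With that parametrization the recursion for $(\xi_{n+1},\beta_{n+1}^\pm)$ in terms of $(\xi_n,\beta_n^\pm)$ and the noise is manifestly local and shift-equivariant, and the rest of your argument goes through verbatim.
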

\begin{proof}
We will give a more explicit construction of the coupling described above which makes the conclusion of the Lemma obvious. 
For each $x\in\Z$ and $n \geq 0$ let 
\[
 \Xi^x_n(\w) := \{ Y^{x,0}_n(j) ,\; Y^{x,+}_n(j),\;Y^{x,-}_n(j) :\; j \geq 1 \}
\]
be a collection of \iid random variables with distribution $\w_x \d_1 + (1-\w_x)\d_{-1}$, and let the collection $\Xi(\w):=\{\Xi^x_n(\w)\}_{x\in\Z, n \geq 0}$ be independent and independent of everything else as well. Assuming some determinstic rule for well-ordering the matched and unmatched $+$ or $-$ particles at each site, the random varibles $Y^{x,0}_n(j)$, $Y^{x,+}_n(j)$, and $Y^{x,-}_n(j)$ give the steps from time $n$ to $n+1$ of the $j^{th}$ matched and unmatched $+$ and $-$ particles at site $x$, respectively. 
%
Thus, we have that 
\begin{align*}
 \xi_{n+1}(x) &= \sum_{z\in\Z} \sum_{j=1}^{\xi_{n}(z)} \indd{ z + Y^{z,0}_n(j) = x} \\
&\qquad + \left( \sum_{z\in\Z} \sum_{j=1}^{\b^+_{n}(z)} \indd{ z + Y^{z,+}_n(j) = x} \right) \wedge \left( \sum_{z\in\Z} \sum_{j=1}^{\b^-_{n}(z)} \indd{ z + Y^{z,-}_n(j) = x} \right),
\end{align*}
\[
 \b_{n+1}^+(x) = \left( \sum_{z\in\Z} \sum_{j=1}^{\b^+_{n}(z)} \indd{ z + Y^{z,+}_n(j) = x}  - \sum_{z\in\Z} \sum_{j=1}^{\b^-_{n}(z)} \indd{ z + Y^{z,-}_n(j) = x} \right)^+,
\]
and
\[
 \b_{n+1}^-(x) = \left( \sum_{z\in\Z} \sum_{j=1}^{\b^+_{n}(z)} \indd{ z + Y^{z,+}_n(j) = x}  - \sum_{z\in\Z} \sum_{j=1}^{\b^-_{n}(z)} \indd{ z + Y^{z,-}_n(j) = x} \right)^-
\]
From the above construction of the coupled process, it is clear that for each $n$ there exists a measureable function $G_n$ such that 
\begin{equation}\label{Gndef}
 (\xi_n(x), \b^+_n(x), \b^-_n(x)) = G_n(\theta^x\eta_0, \theta^x\zeta_0, \Xi(\theta^x\w)),
\end{equation}
where the shift operator $\theta^x$ acts on configurations by $(\theta^x \eta_0)(y) = \eta_0(x+y)$. 
Finally, since $\Xi(\w)$ is independent of $(\eta_0,\zeta_0)$ (given $\w$), another simple adaptation of the proof of Lemma \ref{ergodic} gives that
$\left\{ (\theta^x\eta_0, \theta^x\zeta_0, \Xi(\theta^x\w) \right\}_{x\in\Z}$ is an ergodic sequence under the measure $\P_{\nu\times \s}$. This fact combined with \eqref{Gndef} finishes the proof. 
\end{proof}

\begin{cor} \label{monotone}
 Let $\nu,\s: \Omega \ra \Upsilon$. Then $\E_{\nu\times \s}( \b^+_n(x))$ and $\E_{\nu\times \s}( \b^-_n(x))$ do not depend on $x$ and are non-increasing in $n$. 
\end{cor}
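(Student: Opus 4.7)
The plan splits into two parts: the $x$-independence and the monotonicity in $n$. The $x$-independence is immediate from Lemma~\ref{ergodicbeta}: since $\{(\xi_n(x), \b^+_n(x), \b^-_n(x))\}_{x\in\Z}$ is ergodic, hence stationary, under $\P_{\nu\times\s}$, the one-dimensional marginal distributions are identical in $x$. Finiteness of the means is automatic, since $\E_{\nu\times\s}(\b^\pm_n(x)) \leq \E_\nu(\eta_0(0)) + \E_\s(\zeta_0(0)) < \infty$ by Lemma~\ref{statmean}.

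For monotonicity, let $A^\pm_n(x) := \sum_z \sum_{j=1}^{\b^\pm_n(z)} \indd{z + Y^{z,\pm}_n(j) = x}$ be the total number of unmatched $\pm$ particles landing at $x$ at time $n+1$, so that by the formulas in Lemma~\ref{ergodicbeta} one has $\b^\pm_{n+1}(x) = (A^+_n(x) - A^-_n(x))^\pm$ and
\[
\b^+_{n+1}(x) + \b^-_{n+1}(x) \;=\; |A^+_n(x) - A^-_n(x)| \;\leq\; A^+_n(x) + A^-_n(x).
\]
In addition $\eta_n(x) - \zeta_n(x) = \b^+_n(x) - \b^-_n(x)$, so Lemma~\ref{statmean} applied separately to $\eta$ and $\zeta$ shows that $\E_{\nu\times\s}(\b^+_n(0)) - \E_{\nu\times\s}(\b^-_n(0)) = \E_\nu(\eta_0(0)) - \E_\s(\zeta_0(0))$ does not depend on $n$. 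The crucial remaining ingredient is the identity $\E_{\nu\times\s}(A^\pm_n(0)) = \E_{\nu\times\s}(\b^\pm_n(0))$. Setting $h_n(\w) := E_{\w,\nu^\w\times\s^\w}(\b^+_n(0))$, the construction in Lemma~\ref{ergodicbeta} is manifestly covariant under $\theta^x$, whence $E_{\w,\nu^\w\times\s^\w}(\b^+_n(x)) = h_n(\theta^x\w)$. Conditioning on $\b^+_n$ and using the independence of the increments $Y^{z,+}_n(j)$ gives
\[
E_{\w,\nu^\w\times\s^\w}(A^+_n(0)) \;=\; \w_{-1}\, h_n(\theta^{-1}\w) \;+\; (1-\w_1)\, h_n(\theta^1\w),
\]
and integrating over $\w$, using shift-invariance of $P$ exactly as in the proof of Lemma~\ref{statmean}, collapses this to $E_P[h_n(\w)\w_0 + h_n(\w)(1-\w_0)] = E_P[h_n(\w)] = \E_{\nu\times\s}(\b^+_n(0))$; the argument for $A^-$ is identical. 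Combining these pieces yields
\[
\E_{\nu\times\s}(\b^+_{n+1}(0)) + \E_{\nu\times\s}(\b^-_{n+1}(0)) \;\leq\; \E_{\nu\times\s}(\b^+_n(0)) + \E_{\nu\times\s}(\b^-_n(0)),
\]
so the sum is non-increasing while the difference is constant; both $\E_{\nu\times\s}(\b^\pm_n(0))$ are therefore non-increasing.

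The main obstacle is the conservation identity $\E_{\nu\times\s}(A^\pm_n(0)) = \E_{\nu\times\s}(\b^\pm_n(0))$. A priori, at time $n \geq 1$ the joint distribution of $\{\b^+_n(x)\}_{x\in\Z}$ is far from a product measure over sites, and there is no global conservation to invoke because we are looking at a single site. The saving grace is that only first moments are involved, so the calculation reduces to the $\theta$-covariance of the per-site conditional means combined with the shift invariance of the i.i.d.\ environment $P$, which is precisely the structure already exploited in Lemma~\ref{statmean}.
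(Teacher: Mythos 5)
Your proof is correct in its essentials, but the route you take to monotonicity is genuinely different from the paper's. The paper proves $\E_{\nu\times\s}(\b^\pm_{n+1}(0)) \leq \E_{\nu\times\s}(\b^\pm_n(0))$ by a window-counting argument: the number of unmatched $+$ particles in $[-m,m]$ at time $n+1$ is bounded by the number of unmatched $+$ particles in $[-m-1,m+1]$ at time $n$ (matching can only annihilate unmatched particles, and walks are nearest-neighbor), so two applications of Birkhoff's ergodic theorem to the spatial averages give the inequality directly and separately for $\b^+$ and $\b^-$. You instead prove a site-wise conservation identity $\E_{\nu\times\s}(A^\pm_n(0)) = \E_{\nu\times\s}(\b^\pm_n(0))$ for the pre-rematching arrivals $A^\pm_n$, by the same shift-covariance computation used for Lemma~\ref{statmean}, and combine the monotonicity of the \emph{sum} $\E(\b^+_n)+\E(\b^-_n)$ (since rematching gives $\b^+_{n+1}(x)+\b^-_{n+1}(x)=|A^+_n(x)-A^-_n(x)|\le A^+_n(x)+A^-_n(x)$) with the constancy of the \emph{difference} $\E(\b^+_n)-\E(\b^-_n) = \E_\nu(\eta_0(0))-\E_\s(\zeta_0(0))$. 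That step is a nice observation and yields the result for both signs simultaneously. Your conservation computation is correct as written, including the shift-invariance bookkeeping.

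One caveat worth flagging: your argument requires $\E_\nu(\eta_0(0))$ and $\E_\s(\zeta_0(0))$ to be finite (else the difference $\E(\b^+_n)-\E(\b^-_n)$ and the subtraction at the end are meaningless), and you assert this finiteness is ``automatic by Lemma~\ref{statmean},'' which it is not --- Lemma~\ref{statmean} only says the mean is preserved in time, not that it is finite. The corollary as stated in the paper makes no finiteness hypothesis, and the paper's Birkhoff-based proof is insensitive to it (the ergodic theorem for nonnegative stationary sequences holds with value $+\infty$ permitted). So your proof is slightly less general. In the paper this is harmless because the corollary is invoked inside Proposition~\ref{couple}, which does assume finite means, but you should state the finiteness assumption explicitly rather than claim it follows from Lemma~\ref{statmean}.
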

\begin{proof}
 Since $\b^+_n$ and $\b^-_n$ are both ergodic (and thus stationary), the expectations do not depend on $x$. 
Also, two applications of Birkhoff's ergodic theorem and the fact that the number of unmatched $+$ particles in $[-m,m]$ at time $n+1$ is at most the number of unmatched $+$ particles in $[-m-1,m+1]$ at time $n$ imply that
\[
 \E_{\nu\times \s}( \b^+_{n+1}(0)) = \lim_{m\ra\infty} \frac{1}{2m+1} \sum_{x=-m}^m \b^+_{n+1}(x) \leq \lim_{m\ra\infty} \frac{1}{2m+1} \sum_{x=-m-1}^{m+1} \b^+_{n}(x) = \E_{\nu\times \s}( \b^+_{n}(0)) .
\]
The same argument shows that $\E_{\nu\times \s}( \b^-_n(x))$ is non-increasing as well. 
\end{proof}

\begin{prop}\label{couple}
 Let $\nu,\s: \Omega \ra \Upsilon$, and let $  \E_{\nu\times \s}(\zeta_0(0)) \leq \E_{\nu\times \s}(\eta_0(0)) < \infty$.  Then, 
\[
 \lim_{n\ra\infty} \E_{\nu\times \s} \left[ \b^-_n(0) \right] = \lim_{n\ra\infty} \E_{\nu\times\s} \left[ ( \eta_n(0)- \zeta_n(0))^- \right] = 0. 
\]
\end{prop}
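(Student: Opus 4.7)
The plan is to first reduce the proposition to showing that $L_- := \lim_{n\to\infty} \E_{\nu\times\s}[\b_n^-(0)] = 0$, and then to derive a contradiction by means of the uniform LLN. Corollary \ref{monotone} gives that both $n \mapsto \E_{\nu\times\s}[\b_n^+(0)]$ and $n \mapsto \E_{\nu\times\s}[\b_n^-(0)]$ are non-negative and non-increasing in $n$, so their limits $L_+$ and $L_-$ exist. Applying Lemma \ref{statmean} to each of the marginal processes $\eta_n$ and $\zeta_n$ and using the pointwise identity $\eta_n(0) - \zeta_n(0) = \b_n^+(0) - \b_n^-(0)$ yields
\[
L_+ - L_- = \E\eta_0(0) - \E\zeta_0(0) \geq 0,
\]
so the proposition reduces to proving $L_- = 0$.

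Next, I would argue by contradiction: suppose $L_- > 0$, so that also $L_+ \geq L_- > 0$. By Lemma \ref{ergodicbeta}, both $\b^\pm_n$ have positive spatial density for every $n$. I would quantify the rate of new matchings by tracing particles forward in time using the uniform LLN. Fix $M$ large and consider the $-$ particles present at time $n$ inside the window $W_n := [\vp n - Mn, \vp n + Mn]$. Theorem \ref{ULLN} localizes every such particle's initial position to $[-Mn - o(n), Mn + o(n)]$; the same theorem localizes the $+$ particles that land in $W_n$ to the same initial window. Since the initial configurations $\eta_0$ and $\zeta_0$ are independent given $\w$ and translation-invariant in law, ergodicity forces the initial window to contain a positive density of close $(+,-)$ pairs of compatible parity. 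Collisions of these pairs during $[0,n]$ should produce new matchings at a positive rate per unit space, so that
\[
\E_{\nu\times\s}[\b_0^-(0)] - \E_{\nu\times\s}[\b_n^-(0)] \geq c(L_+,L_-) > 0
\]
uniformly in $n$, contradicting the convergence to $L_- > 0$.

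The main obstacle is the collision estimate: for $P$-a.e.\ $\w$ and for any $y_1, y_2 \in \Z$ with $y_2 - y_1$ even and bounded, one must show that two independent RWRE $X_\cdot^{y_1}$ and $X_\cdot^{y_2}$ under $P_\w$ occupy a common site at some time $k \leq n$ with probability tending to $1$ as $n \to \infty$. The parity constraint (walks with opposite-parity starting positions can never share a site, since their difference always lies in $2\Z$) is handled by restricting to compatible-parity pairs, which still have positive density by translation invariance under $\nu^\w \times \s^\w$. The correlation of the two walks through the shared environment is more delicate; I expect to handle it via a combination of the uniform quenched bounds of Section \ref{UQLDP} (in particular Proposition \ref{UQLDPslow}), together with the annealed fluctuation behavior afforded by Assumption \ref{Basm} (whose tail exponent $s>1$ guarantees that the difference of the two walks has fluctuations of order at least $n^{1/s}$, which is unbounded and so forces collisions).
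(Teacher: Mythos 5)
Your reduction to showing $L_- := \lim_n \E_{\nu\times\s}[\b_n^-(0)] = 0$ via Corollary~\ref{monotone} and Lemma~\ref{statmean} is correct and matches the paper. After that, however, your argument diverges from the paper's and has two genuine gaps.

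First, the quantitative contradiction you state does not close. You claim a bound of the form $\E_{\nu\times\s}[\b_0^-(0)] - \E_{\nu\times\s}[\b_n^-(0)] \geq c(L_+,L_-) > 0$ ``uniformly in $n$,'' and say this contradicts $L_->0$. It does not: the left side is monotone and converges to $\E_{\nu\times\s}[\b_0^-(0)] - L_-$, which is itself positive, so a uniform positive lower bound is no contradiction. To make a rate-of-matching argument work you would need the cumulative decrease to become \emph{unbounded}, e.g.\ a uniform decrease over each time interval $[n, n+T]$, and then iterate. But establishing such a uniform per-interval decrease is delicate, because the spatial configuration of unmatched particles at time $n$ is not a fresh product measure and its local structure is not obviously controlled by $L_\pm$ alone. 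The paper sidesteps this entirely by passing to the limit $n\to\infty$ first: it defines $\l^\pm_\infty(x)$, the number of \emph{immortal} (never-matched) $\pm$ particles starting at $x$, shows via Lemma~\ref{ergodicbeta}, Birkhoff, and monotone convergence that $\E_{\nu\times\s}[\l^\pm_\infty(0)] \geq \d > 0$, and then derives a contradiction from the existence, $\P_{\nu\times\s}$-a.s., of two same-parity immortal particles of opposite sign in a finite window.

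Second, you correctly identify the key collision lemma --- that two walks in the same environment starting at same-parity sites must a.s.\ meet (Lemma~\ref{meeting}) --- as the main obstacle, but the sketch you give for it would not work. Unbounded fluctuations of the difference $X_n^{y_1} - X_n^{y_2}$ do not by themselves force a meeting; one must show the difference process actually crosses zero, i.e.\ some sign-changing/recurrence property, and neither Proposition~\ref{UQLDPslow} nor the annealed tail exponent $s$ gives that. The paper's proof switches from positions to hitting times: writing $T^0_z = \sum_{i\leq z}\tau_i$ and $T^y_{z-y} = \sum_{i\leq z}\tilde\tau_i$, it observes that under $P_\w$ the increments $\tau_i - \tilde\tau_i$ are \emph{independent and symmetric} (since $\tau_i,\tilde\tau_i$ are i.i.d.\ given $\w$), then combines a fair-coin-toss comparison with ergodicity of $\{\tau_i-\tilde\tau_i\}$ under $\P$ to conclude $\sup_z\sum_{i\leq z}(\tau_i-\tilde\tau_i)=\infty$ a.s., so the trailing walk overtakes the leading walk. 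This symmetric-increments structure is specific to hitting times and is invisible from the position-fluctuation viewpoint you propose. Also note that Theorem~\ref{ULLN} plays no role in the paper's proof of this proposition; localizing the walks does not help with the meeting question.
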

\begin{proof}
 By Corollary \ref{monotone} it is enough to show that for any $\d>0$, $\E_{\nu\times \s} \left[ \b^-_n(0) \right] < \d$ for some $n$. Assume for contradiction that there exists a $\d>0$ such that $\E_{\nu\times \s} \left[ \b^-_n(0) \right] \geq \d$ for all $n$. Lemma \ref{statmean} and the assumptions of the proposition imply that 
\[
 \E_{\nu\times \s}(\b^+_n(0)) - \E_{\nu\times \s}(\b^-_n(0)) = \E_{\nu\times \s} \left[ \eta_n(0) - \zeta_n(0) \right] = \E_{\nu\times \s}(\eta_0(0)) - \E_{\nu\times \s}(\zeta_0(0)) \geq 0. 
\]
Therefore, $\E_{\nu\times \s} \left[ \b^+_n(0) \right] \geq \d$ for all $n$ as well. 

Given $\eta_0$ and $\zeta_0$, well-order the unmatched $+$ and $-$ particles and let $w_j^+(\cdot)$ and $w_j^-(\cdot)$ be the trajectories of the $j^{th}$ initially unmatched $+$ or minus particle, respectively. 
Then, for each $j$, denote the amount of time until $w_j^+$ is matched by $\tau_j^+ \in [1,\infty]$, and similalry let $\tau^-_j$ be the amount of time until $w_j^-$ is matched. If $\tau_j^{\pm} = \infty$, then the particle is said to be \emph{immortal}. Let 
\[
 \l_n^{\pm}(x) = \sum_j \indd{ w_j^\pm = x ,\; \tau_j^\pm > n},
\]
be the number of $\pm$ particles initially at site $x$ that are not matched after $n$ steps. A similar argument to the proof of Lemma \ref{ergodicbeta} implies that $\{(\l^+_n(x),\l^-_n(x))\}_{x\in\Z}$ is ergodic as well. In fact, because of certain periodicity issues that will arise later what we really need is that $\{ (\b^+_n(x), \b^-_n(x))\}_{x\in 2\Z}$ and $\{(\l^+_{n}(x),\l^-_n(x))\}_{x\in 2\Z}$ are ergodic. However, this also holds by essentially the same proof by noting that $\{ \theta^x \w \}_{x\in 2\Z}$ is an ergodic sequence since the environments are i.i.d. 
Two applications of Birkhoff's Ergodic Theorem imply that
\begin{align}
 \d \leq \E_{\nu\times\s}[ \b^\pm_n(0) ] 
&= \lim_{M\ra\infty} \frac{1}{2M+1} \sum_{x=-M}^M \b^\pm_n(2x) \nonumber \\
&\leq \lim_{M\ra\infty} \frac{1}{2M+1} \sum_{x=-M-n}^{M+n} \l^\pm_n(2x) 
= \E_{\nu\times\s}[ \l^\pm_n(0) ]. \label{avglonglife}
\end{align}

Let $\l^\pm_\infty(x) = \lim_{n\ra\infty} \l^\pm_n(x)$ be the number of immortal $\pm$ particles originally at $x$. Again, as was shown for $\l^\pm_n$ it can be shown that $\{(\l^+_\infty(x),\l^-_\infty(x))\}_{x\in 2\Z}$ is ergodic. Then \eqref{avglonglife} and the monotone convergence theorem imply that 
\[
 \lim_{M\ra\infty} \frac{1}{2M+1}\sum_{x=-M}^M \l_\infty^\pm(2x) = \E_{\nu\times \s} [\l_\infty^\pm(0)] = \lim_{n\ra\infty} \E_{\nu\times \s} [\l_n^\pm(0)] \geq \d.
\]
That is, there is a positive initial density of immortal $+$ and $-$ particles. 
Therefore, $\P_{\nu\times\s}-a.s.$, there exists an $M<\infty$ such that there exists at least one $+$ and $-$ immortal particle in $[-2M,2M]\cap 2\Z$. In particular, this implies that there exists an $M<\infty$, points $y,z\in [-2M,2M]\cap 2\Z$, and random walks starting at $y$ and $z$ that never meet. 
Since there are only finitely many particles initially in any finite interval, the following lemma gives a contradiction and thus finishes the proof of Proposition \ref{couple}. 
\begin{lem}\label{meeting}
 Let $y,z\in\Z$ be of the same parity (that is $z-y \in 2 \Z$). 
Then, $\P-a.s.$, two random walks in the same environment and starting at $y$ and $z$, respectively, must eventually meet. 
That is, $ \P\left( \exists n\geq 0:\; X^y_n = X^z_n \right) = 1$. 
\end{lem}

\end{proof}
\begin{proof}[\textbf{Proof of Lemma \ref{meeting}:}]
By the shift invariance of $P$, without loss of generality we may assume that $x=0$ and that $y<0$. Then, it is enough to prove that with $\P$-probability one, there is some site $z>0$ that the random walk started at $y<0$ reaches before the random walk started at $0$ does. That is, 
\[
 \P\left( \exists z > 0:\;  T^y_{z-y} < T^0_z \right) = 1. 
\]
Now, we may re-write $T^0_z = \sum_{i=1}^z \tau_i$ and $T^y_{z-y} = \sum_{i=y+1}^z \tilde{\tau}_i$, where $\tau_i$ and $\tilde{\tau}_i$ are the amount of time it takes to reach $i+1$ after first reaching $i$ for the walks $X^0_\cdot$ and $X^y_\cdot$, respectively. (That is, $\tau_i = T^0_i-T^0_{i-1}$ and $\tilde{\tau}_i = T^y_{i-y}-T^y_{i-y-1}$ for the $i$ appearing in each of the above sums.)
Note that for any $i\geq 1$, given the environment $\w$ (that is under $P_\w$) $\tau_i$ and $\tilde{\tau}_i$ are independent and have the same distribution. 
Now, for any $z>0$ we have that 
\[
 T^y_{z-y} < T^0_z \iff \sum_{i=y+1}^z \tilde{\tau}_i < \sum_{i=1}^z \tau_i \iff \sum_{i=y+1}^0 \tilde{\tau}_i < \sum_{i=1}^z \left( \tau_i - \tilde{\tau}_i \right)
\]
Therefore, we wish to show that $\P\left( \exists z>0: \; \sum_{i=y+1}^0 \tilde{\tau}_i < \sum_{i=1}^z \left( \tau_i - \tilde{\tau}_i \right) \right) = 1$. Since $\sum_{i=y+1}^0 \tilde{\tau}_i$ is finite, $\P-a.s.$, it is enough to show that 
\begin{equation}\label{limsuptau}
 \P\left( \sup_{z>0} \sum_{i=1}^z (\tau_i - \tilde{\tau}_i) = \infty \right) = 1.
\end{equation}
It is known that the sequence $\{\tau_i\}_{i\in\Z_+}$ is ergodic under the averaged measure $\P$ (see \cite{sRWRE}). This same argument shows that $\{(\tau_i,\tilde{\tau}_i) \}_{i\in\Z_+}$ is ergodic as well. In particular, this implies that $\tau_i-\tilde{\tau}_i$ is an ergodic sequence. Since the event in \eqref{limsuptau} is shift invariant, it is enough to prove that the right-hand side of \eqref{limsuptau} is non-zero.

Let $i_k$ be the sequence of indices where the $\tau_i$ and $\tilde{\tau}_i$ are different. That is,
\[
 i_1 = \min \{i\geq 1:\, \tau_i \neq \tilde{\tau}_i \}, \qquad i_{k+1} = \inf \{ i> i_k: \, \tau_i \neq \tilde{\tau}_i \}, \quad k\geq 1. 
\]
Define for any integers $l,M\geq 1$ the event 
\[
 E_{l,M} := \bigcap_{k=l}^{l+2M} \{ \tau_{i_k} > \tilde{\tau}_{i_k} \}. 
\]
Since $\tau_i$ and $\tilde{\tau}_i$ are independent and identically distributed under $P_\w$, given that they are different they are each equally likely to be the larger than the other. That is, $P_\w( \tau_{i_k} > \tilde{\tau}_{i_k} ) = 1/2$ for all $k\geq 1$. Also, $\{ \tau_i - \tilde{\tau}_i \}_{i\geq 1}$ is an independent sequence of random variables under $P_\w$, and thus by comparison with an infinite sequence of fair coin tosses we obtain that 
for any $M\geq 1$, 
\begin{equation}\label{Elmoccurs}
 P_\w\left( \bigcup_{l=1}^\infty E_{l,M} \right) = 1, \qquad P-a.s.
\end{equation}

Note that the event $E_{l,M}$ implies that $\sup_{z>0} \left| \sum_{i=1}^z (\tau_{i} - \tilde{\tau}_i)  \right| > M$.
Indeed, for $z=i_{l+2M}$ we obtain that
\[
 \sum_{i=1}^{i_{l+2M}} (\tau_{i} - \tilde{\tau}_i) = \sum_{k=1}^{l+2M}  (\tau_{i_k} - \tilde{\tau}_{i_k}) = \sum_{k=1}^{l-1}  (\tau_{i_k} - \tilde{\tau}_{i_k}) + \sum_{k=l}^{l+2M}  (\tau_{i_k} - \tilde{\tau}_{i_k}).
\]
$E_{l,M}$ implies that the second sum on the right above is at least $2M+1$. Therefore, either the first sum is less than $-M$ or the first and second sum together on the right are greater than $M$. 
Therefore, since \eqref{Elmoccurs} holds for any $M\geq 1$ we obtain that
\[
 1 = \P\left( \sup_{z > 0} \left|\sum_{i=1}^z (\tau_i - \tilde{\tau}_i) \right| = \infty \right) \leq  \P\left( \sup_{z > 0} \sum_{i=1}^z (\tau_i - \tilde{\tau}_i)  = \infty \right) + \P\left( \inf_{z > 0} \sum_{i=1}^z (\tau_i - \tilde{\tau}_i)  = - \infty \right). 
\]
Since the $\tau_i$ and $\tilde{\tau}_i$ are identically distributed this implies that $\P\left( \sup_{z > 0} \sum_{i=1}^z (\tau_i - \tilde{\tau}_i)  = \infty \right) \geq \frac{1}{2}$. However, as noted above, the ergodicity of $\tau_i-\tilde{\tau}_i$ implies that this last probability is in fact equal to $1$. 
This completes the proof of \eqref{limsuptau} and thus also the proof of the Lemma.
\end{proof}

\end{subsection}

We now return to the proof of Theorem \ref{uniquestat}. 
\begin{proof}[\textbf{Proof of Theorem \ref{uniquestat}:}]
Let $(\eta_n,\zeta_n)$ be the coupled process as described above with law $\P_{\nu \times \pi_\a}$, where $\a=\vp \E_\nu( \zeta_0(0) )$ so that $\E_{\nu}(\eta_0(0)) = \E_{\pi_\a}( \zeta_0(0) )$. Proposition \ref{couple} then implies that 
\[
 \lim_{n\ra\infty} \P_{\nu \times \pi_\a} ( \eta_n(0) \neq \zeta_n(0) ) \leq \lim_{n\ra\infty} \E_{\nu \times \pi_\a} | \eta_n(0) - \zeta_n(0) | = 0. 
\]
Therefore, for any cylinder set $E\subset (\Zp)^\Z$, 
$
 \lim_{n\ra\infty} \left| \P_{\nu} (\eta_n \in E ) - \P_{\pi_\a} (\zeta_n \in E) \right| = 0. 
$
However, Lemma \ref{StationaryDist} implies that $\P_{\pi_\a} (\zeta_n \in E) = \P_{\pi_\a}(\zeta_0 \in E)$, and thus 
\begin{equation}\nonumber
 \lim_{n\ra\infty} \P_\nu(\eta_n \in E) = \P_{\pi_\a}(\zeta_0 \in E). 
\end{equation}

It remains only to show that the sequence $\{\zeta_n\}_{n\geq 0}$ is tight. Indeed, for any positive integer $M$ let 
\[
 K_{M}:= \{ z=(z_x)_{x\in\Z} \in (\Zp)^\Z: z_x < M^2, \; \forall x\in[-M,M] \}.
\]
Then, $K_{M}$ is a compact subset of $(\Zp)^\Z$. Moreover,
\begin{align} \label{tightness}
 \P_{\nu}( \zeta_n \notin K_{M} ) \leq \sum_{x=-M}^M \P_\nu(\zeta_n(x) \geq M^2 ) \leq \E_\nu (\zeta_0(0))\frac{2M+1}{M^2} . 
\end{align}
where the last equality is from Chebychev's Inequality, Lemma \ref{statmean}, and the fact that $\zeta_0(x)$ is a stationary sequence under $\P_\nu$. Therefore, since the expectation on the right is finite by assumption, $\lim_{M\ra\infty} \sup_n \P_{\nu}( \zeta_n \notin K_{M} ) = 0$.
\end{proof}

\end{section}

\def\cprime{$'$}


\end{document}